\theoremstyle{plain}
\newtheorem{theorem}{Theorem}[section]
\newtheorem{corollary}[theorem]{Corollary}
\newtheorem{lemma}[theorem]{Lemma}
\newtheorem{proposition}[theorem]{Proposition}
\newtheorem{definition}[theorem]{Definition}
\newtheorem*{definition*}{Definition}
\theoremstyle{remark}
\newtheorem{example}[theorem]{Example}
\newtheorem*{claim*}{Claim}
\newtheorem*{remark*}{Remark}
\newtheorem*{example*}{Example}
\newtheorem*{notation*}{Notation}
\numberwithin{equation}{section}
\def\S{{\mathbb S}}
\def\N{{\mathbb N}}
\def\R{{\mathbb R}}
\newcommand{\dd}{{\rm d}}
\newcommand{\cP}{{\mathcal P}}
\DeclareMathOperator{\Con}{Con}
\DeclareMathOperator{\Ric}{Ric}
\DeclareMathOperator{\ent}{Ent}
\author{Matthias Erbar, Karl-Theodor Sturm}
\title{Rigidity of cones with bounded Ricci curvature}
\begin{document}

\begin{abstract}
  We show that the
  only metric measure space with the structure of an $N$-cone and
 with two-sided synthetic Ricci bounds  is the Euclidean space
  $\R^{N+1}$ for $N$ integer. This is based on a novel 
  notion of Ricci curvature upper bounds for
  metric measure spaces given in terms of the short time asymtotic of
  the heat kernel in the $L^2$-transport distance. 
  
  Moreover, we establish a beautiful
  rigidity results of independent interest which characterize the
  $N$-dimensional standard sphere $\S^N$ as the unique minimizer of 
  $$\int_X\int_X \cos d (x,y)\, m(\dd y)\,m(\dd x)$$
   among all metric measure
  spaces with dimension bounded above by $N$ and Ricci curvature bounded
  below by $N-1$.
\end{abstract}

\maketitle

\section{Introduction}
\label{sec:intro} 

\footnote{
Both authors gratefully acknowledge  support by the German Research Foundation through the Hausdorff Center for Mathematics and the Collaborative Research Center 1060. The second author also gratefully acknowledges 
 support by the European Union through the ERC-AdG ``RicciBounds''.
}The theory of synthetic curvature-dimension bounds for non-smooth
space has been very active and successful in the last decades. It was
initiated in the works of Bakry--\'Emery \cite{BE85} from the point of
view of abstract Markov semigroups and Lott--Villani \cite{LV09} and
Sturm \cite{St06} from the point of view of optimal transport and
metric measure space. Generalized lower bounds on the Ricci curvature
and upper bounds on the dimension lead to a large number of geometric
and functional inequalities and powerful control on the underlying
diffusion process. By now, many precise analytic and geometric results
for metric measure spaces under curvature-dimension bounds have been
established such as Li--Yau type estimates for heat semigroup
\cite{GM14} and splitting and rigidity results \cite{G13, Ket15} and a
clear picture of the fine structure of such spaces is emerging
\cite{MN17}.

Recently, significant progress has been made in developing more
detailed synthetic control on the Ricci curvature in a non-smooth
context. Gigli \cite{G15} and Han \cite{H14} provide a definition of
the full Ricci tensor on metric measure spaces, building upon 
a similar contruction in the context of $\Gamma$-calculus by Sturm \cite{St14}. Naber
\cite{N13} characterized two-sided bounds on the Ricci curvature in
terms of functional inequalities in the path space, see also recent
work of Cheng--Thalmaier \cite{CT16} and of Wu \cite{Wu17}.
%
%

 A drawback of the previous approaches to detailed
controls on Ricci is that they do not see curvature concentrated in
 singular sets such as the tip of a cone. One goal of the present
article is to a analyze a different concept of synthetic upper Ricci
bounds introduced recently by the first author \cite{St17} and to impose a
remarkable rigidity: the only metric measure spaces with cone
structure and with Ricci curvature bounded above and below are
Euclidean spaces $\R^N$.

We will work in the setting of RCD$^*(K',N')$ metric measure spaces, see
Section \ref{sec:prelim} for definitions and references. In this
setting an equivalent definition of lower Ricci bound $K$ is the
contraction estimate
\begin{align*}
  W_2(\hat P_t\mu,\hat P_t \nu)\leq e^{-Kt}W_2(\mu,\nu)\;,
\end{align*}
for the dual heat flow $\hat P_t$ in $L^2$-Wasserstein distance. The
central object in \cite{St17} to define upper Ricci bounds is a
reverse estimate asymptotically for short times. More precisely,
consider for a mm-space $(X,d,m)$ and $x,y\in X$
\begin{align*}
  \vartheta^+(x,y) := -\liminf_{t\to0}\frac{1}{t}\log\left(\frac{W_2(\hat P_t\delta_x,\hat P_t\delta_y)}{d(x,y)}\right)\;,
\quad \vartheta^*(x):= \limsup_{y,z\to x}\vartheta^+(y,z)\;.
\end{align*}
For smooth Riemmanian mainfolds an upper bound $\Ric\leq K$ is
equivalent to requiring $\vartheta^*(x)\leq K$ for all $x$. For an RCD
space $(X,d,m)$ we take the latter as a definition of $\Ric\leq K$,
see Section \ref{sec:upper} for more details.

\medskip

Our first  main result is the following rigidity theorem for cones.

\begin{theorem}\label{thm:rigidity-cone}
  Let $(Y,d_Y,m_Y)$ be a mm-space satisfying the curvature-dimension
  conditon RCD$^*(K',N')$ for some $K'\in R$ and $N'\in [0,\infty)$ and
  assume that it is the $N$-cone over a mm space $(X,d_X,m_X)$ for some $N\geq1$. Then:
  \begin{itemize}
  \item[(i)] either $\vartheta^+(o,y)=+\infty$ for any $y\in y$ and $o$ the
    tip of the cone,
    
  \item[(ii)] or $N$ is an integer and $(Y,d_Y,m_Y)$ is isomorphic to Euclidean
    space $R^{N+1}$ with the Euclidean distance and a multiple of the Lebesgue measure.
  \end{itemize}
  In particular, (up to isomorphism) the only $N$-cone with bounded
  Ricci curvature among all mm-spaces is the $N$-dimensional Euclidean space
  for $N$ integer.
\end{theorem}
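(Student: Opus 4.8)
The plan is to prove the dichotomy (i)/(ii); the concluding ``in particular'' assertion is then immediate, since a two--sided Ricci bound means exactly that $\vartheta^*$, and hence $\vartheta^+(o,\cdot)$, stays finite, which excludes alternative (i) and leaves (ii). \emph{Reduction to the cross section.} An $N$--cone carries the dilations $D_\lambda(r,x)=(\lambda r,x)$, which multiply $d_Y$ by $\lambda$ and $m_Y$ by $\lambda^{N+1}$. This self--similarity is compatible with a synthetic lower bound only for $K'=0$, while the $\lambda^{N+1}$ volume scaling pins $N'=N+1$, so I would first reduce to RCD$^*(0,N+1)$. I would then invoke the characterization of the curvature--dimension condition for cones (Bacher--Sturm, Ketterer): $Y=\Con(X)$ is RCD$^*(0,N+1)$ if and only if $(X,d_X,m_X)$ satisfies RCD$^*(N-1,N)$ with $\operatorname{diam}(X)\le\pi$. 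This moves all of the geometry onto the cross section $X$.

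\emph{Short--time asymptotics at the tip.} The core is the behaviour of $W_2(\hat P_t\delta_o,\hat P_t\delta_y)$ as $t\to0$ for a fixed $y=(s,x_0)$. By the scaling of the dual heat flow one has $\hat P_t\delta_o=(D_{\sqrt t})_\#\,\hat P_1\delta_o$, so the mass leaving the tip remains radially symmetric and concentrates at scale $\sqrt t$. Since $K'=0$ forces contraction, the ratio $W_2(\hat P_t\delta_o,\hat P_t\delta_y)/d(o,y)$ is $\le1$, and everything hinges on the order of its first correction. The conical singularity makes this correction of order $\sqrt t$ rather than $t$, with a nonnegative coefficient $c=c(X)$: when $c>0$ one gets $\tfrac{1}{t}\log(\mathrm{ratio})\to-\infty$, i.e. $\vartheta^+(o,y)=+\infty$, which is alternative (i). I would establish the $\sqrt t$--expansion by a coupling (upper) bound together with a Kantorovich--potential (lower) bound, exploiting separation of variables on the cone into a radial, Bessel--type part and the spectrum of the cross--sectional Laplacian.

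\emph{Rigidity and conclusion.} It remains to characterize $c=0$. I would show that $c$ is governed by the functional $\int_X\int_X\cos d_X(x,z)\,m_X(\dd x)\,m_X(\dd z)$ (heuristically, by the gap between $\lambda_1(X)$ and its Lichnerowicz value $N$). The rigidity theorem announced in the abstract---that $\mathbb{S}^N$ is the unique minimizer of this functional among mm--spaces with $\dim\le N$ and $\Ric\ge N-1$---then yields $c=0\iff X\cong\mathbb{S}^N$ with its round metric and normalized volume. A genuine round sphere exists only for integer $N$, and $\Con(\mathbb{S}^N)\cong\R^{N+1}$ with a constant multiple of Lebesgue measure, which is precisely alternative (ii).

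\emph{Main obstacle.} The hard part will be the short--time expansion of the second paragraph: extracting the $\sqrt t$--coefficient of the Wasserstein cost with enough precision to identify it with the sphere--characterizing functional, and matching the coupling upper bound against the dual lower bound. The difficulty is that the optimal transport must respect the conical geometry at the tip, where the radially symmetric blob emanating from $\delta_o$ cannot simply be translated toward $y$ as it could at a regular point; quantifying this obstruction, and showing it vanishes exactly in the Euclidean case, is what ties the analytic estimate to the geometric rigidity.
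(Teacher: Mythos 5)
Your strategy coincides with the paper's: pass to the cross-section via a curvature-dimension equivalence for cones, detect a defect of order $\sqrt t$ in $W_2(\hat P_t\delta_o,\hat P_t\delta_y)$ whose coefficient is a cosine functional (coupling upper bound, Kantorovich duality lower bound, Bessel-type radial marginal), and conclude with the $f=\cos$ sphere rigidity theorem together with $\Con_0^N(\S^N)\cong\R^{N+1}$. Two steps, however, contain genuine gaps.

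First, the reduction to RCD$^*(0,N+1)$. Scaling cannot ``pin'' the dimension parameter: in CD$^*$/RCD$^*$ the parameter $N'$ is only an upper bound, and $\R^{N+1}=\Con_0^N(\S^N)$ carries exactly the dilations and the $\lambda^{N+1}$ volume scaling you describe while satisfying RCD$^*(0,N')$ for every $N'\geq N+1$. What the dilations do give is the reduction $K'\to 0$ (this is Step~2 of the paper's proof of Theorem \ref{thm:generalized_cones}: $(K',N')$-convexity of the entropy along rescaled geodesics yields $(K'\lambda^2,N')$-convexity along the original ones, and one lets $\lambda\to0$). The passage from RCD$^*(0,N')$ with $N'\geq N+1$ on the cone to RCD$^*(N-1,N)$ on $X$ --- which you need before Ketterer's equivalence at the parameters $(0,N+1)$ can be quoted --- is precisely the hard technical content of Theorem \ref{thm:generalized_cones}, Step~1: a $\Gamma_2$/Bochner computation on the cone with radial test function $u(r)=r$, followed by a gradient-estimate argument eliminating the error term $\frac{N'-N}{N'N}\int\big(L^Xv+Nv\big)^2\phi\,\dd m_X$. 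Your proposal has no substitute for this; it is a missing idea, not a detail.

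Second, the coefficient of the $\sqrt t$-defect is not a constant $c(X)$: by Proposition \ref{prop:cosint-asymptotic} it is, at $y=(r_0,x_0)$, proportional to $r_0\,a(x_0)$ with $a(x_0)=\int_X\cos\big(d_X(x_0,z)\big)\,m_X(\dd z)$, so it genuinely depends on the cross-sectional point. If $X$ is a spherical suspension $\Con_1^{N-1}(Y)$ of a non-spherical base (e.g.\ $Y=\S^2(1/\sqrt3)\times\S^2(1/\sqrt3)$, $N=5$), then $\Con_0^N(X)\cong\R\times\Con_0^{N-1}(Y)$ splits off a line; $a$ vanishes at the two poles and $\vartheta^+(o,y)=0$ for every $y$ on the resulting axis, although the cone is far from Euclidean and the double cosine integral is strictly positive. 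Hence (a) alternative (i) must be read existentially (``there is some $y$ with $\vartheta^+(o,y)=+\infty$''), and to land in (ii) you must prove $a(x_0)=0$ for every (or $m_X$-a.e.) $x_0$, i.e.\ finiteness of $\vartheta^+(o,y)$ for \emph{all} $y$; the slogan ``$c=0$ iff the double integral vanishes'' obscures exactly this quantifier. And (b) your one-line dispatch of the ``in particular'' claim --- ``$\vartheta^*$ finite, hence $\vartheta^+(o,\cdot)$ finite'' --- is an unjustified local-to-global implication, since $\vartheta^*$ only controls pairs of nearby points. The argument that works runs in the opposite direction: if $a(x_0)>0$ for some $x_0$, then $\vartheta^+(o,(r,x_0))=+\infty$ for every $r>0$, so letting $r\to0$ gives $\vartheta^*(o)=+\infty$, contradicting any upper Ricci bound at the tip.
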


An important ingredient to establish the rigidity of cones will be a
novel class of rigidity results characterizing the standard sphere
$\S^N$ which will be applied to characterize the base of the
cone. They are of independent interest and form the second goal of
this article.



Let $f:[0,\pi]\to \R$ be continuous and strictly increasing 
and put for a mm-space $(X,d,m)$ with $m(X)<\infty$ and $\text{diam}(X)\leq\pi$: 
\begin{align*}
  M_f(X) &:= \frac{1}{m(X)^2}\int_X\int_Xf\big(d(x,y)\big) \,dm(x) \,dm(y)\;,\\
  M_{f,N}^*&:=\Big[\int_0^\pi f\big(r\big)\sin(r)^{N-1}\dd r\Big]/\Big[\int_0^\pi\sin(r)^{N-1}\dd r\Big]\;.
\end{align*}

\begin{theorem}\label{thm:rigidity-more}
  Let $(X,d, m)$ be an RCD$^*(N-1,N)$ space with $N\geq1$,
  $\text{diam}(X)\leq \pi$. Then we have
  $M_f(X)\leq M_{f,N}^*$. Moreover, the following are equivalent:
  \begin{itemize}
    \item[(i)] $M_f(X)=M_{f,N}^*$,
    \item[(ii)] $N$ is an integer and $X$ is isomorphic to the sphere
      $\S^N$ with the round metric and a multiple of the volume measure.
  \end{itemize}
\end{theorem}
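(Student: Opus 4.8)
The plan is to reduce the inequality to a one-dimensional stochastic comparison between the distributions of pairwise distances. For fixed $x\in X$ let $\mu_x$ denote the law of $d(x,\cdot)$ under $m/m(X)$, a probability measure on $[0,\pi]$, and set $\mu^*=v(\pi)^{-1}\sin(r)^{N-1}\dd r$ with $v(r)=\int_0^r\sin(s)^{N-1}\dd s$, so that $M_f(X)=m(X)^{-1}\int_X\big(\int f\,\dd\mu_x\big)\dd m(x)$ and $M_{f,N}^*=\int f\,\dd\mu^*$. Since $\mu_x([0,t])=m(\bar B(x,t))/m(X)$, the Bishop--Gromov volume comparison in RCD$^*(N-1,N)$ (using $\mathrm{diam}\le\pi$, so that $m(\bar B(x,\pi))=m(X)$) gives $\mu_x([0,t])\ge v(t)/v(\pi)=\mu^*([0,t])$ for every $t$ and every $x$; thus each $\mu_x$ is stochastically dominated by $\mu^*$. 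For continuous strictly increasing $f$ I would then write, via Fubini for the non-negative Stieltjes measure $\dd f$,
\begin{align*}
  \int f\,\dd\mu^*-\int f\,\dd\mu_x=\int_0^\pi\big(\mu_x([0,t])-\mu^*([0,t])\big)\,\dd f(t)\ge0,
\end{align*}
and integrating over $x$ yields $M_f(X)\le M_{f,N}^*$. The implication (ii)$\Rightarrow$(i) is immediate: on the round sphere homogeneity gives $\mu_x=\mu^*$ for every $x$, hence equality.

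For the nontrivial direction (i)$\Rightarrow$(ii) I would exploit that $f$ is \emph{strictly} increasing. Equality $M_f(X)=M_{f,N}^*$ forces $\int f\,\dd\mu_x=\int f\,\dd\mu^*$ for $m$-a.e.\ $x$; since $\mu_x\preceq\mu^*$ and $\dd f$ has full support, the displayed identity forces $\mu_x([0,t])=\mu^*([0,t])$ for all $t$, i.e.
\begin{align*}
  m(\bar B(x,t))=\frac{m(X)}{v(\pi)}\,v(t)\qquad\text{for all }t\in[0,\pi]
\end{align*}
for $m$-a.e.\ $x$. In particular the spherical profile is attained with full support up to $r=\pi$, so $\mathrm{diam}(X)=\pi$, and the Bishop--Gromov inequality is saturated from almost every point up to the maximal radius.

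At this point I would invoke Ketterer's maximal diameter theorem: an RCD$^*(N-1,N)$ space of diameter $\pi$ is isomorphic to the spherical suspension $\Sus(X')$ over an RCD$^*(N-2,N-1)$ space $X'$ (for $N\ge2$), with $m$ corresponding to $\sin(t)^{N-1}\dd t\otimes m_{X'}$. The key step is then to propagate the volume-profile rigidity from $X$ to its base: using the suspension distance $\cos d_X=\cos a\cos b+\sin a\sin b\cos d_{X'}$ and the explicit product measure, the equality $m(\bar B((a,x'),t))=\tfrac{m(X)}{v(\pi)}v(t)$ for a.e.\ $(a,x')$ should force the analogous spherical profile $\mu^{X'}_{x'}=\mu^{*,N-1}$ for a.e.\ $x'\in X'$. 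An induction on the suspension structure then gives $X'\cong\S^{N-1}$, hence $X\cong\Sus(\S^{N-1})=\S^N$. The chain of suspensions must terminate at $\S^0$ (two points), which is exactly what forces $N$ to be an integer: for $N\in(1,2)$ the base would be an RCD$^*(N-2,N-1)$ space with $N-1<1$, and the only candidate (an interval with density $\sin^{N-1}$) fails the profile away from the poles, a contradiction, while the range $N\in[1,2)$ reduces directly to $\S^1$.

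The main obstacle is precisely this last rigidity step. Converting the integral equality into the pointwise geometric statement is routine, but showing that the spherical volume profile survives passage to the base of the suspension---so that the induction closes and integrality of $N$ is forced---requires a careful analysis of the suspension measure and distance, and is where the real work lies.
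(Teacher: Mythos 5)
Your reduction to stochastic domination is correct and is essentially the paper's own first step in different clothing: Bishop--Gromov (with $R=\pi$) gives $\mu_x([0,t])\ge\mu^*([0,t])$, the Stieltjes/Fubini identity gives $M_f(X)\le M^*_{f,N}$ (the paper writes the same computation via the layer-cake formula), and equality plus strict monotonicity of $f$ forces $m(\bar B_t(x))=\frac{m(X)}{v(\pi)}v(t)$ for all $t$ and $m$-a.e.\ $x$; in particular a.e.\ point admits a partner at distance exactly $\pi$, so Ketterer's maximal diameter theorem applies. Up to this point the argument is sound and parallel to the paper.

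The gap is exactly the step you flag yourself: the induction. You propose to push the \emph{full} volume profile down to the base $X'$ of the suspension, i.e.\ to deduce $\mu^{X'}_{x'}=\mu^{*,N-1}$ a.e.\ from the profile identity on $X$, but this is only asserted (``should force''), not proved. It amounts to inverting, or exploiting strict monotonicity of, the integral transform
\begin{align*}
  m\big(\bar B_t((a,x'))\big)=\int_0^\pi \sin^{N-1}(b)\,
  \mu^{X'}_{x'}\Big(\big\{\theta:\ \cos a\cos b+\sin a\sin b\cos\theta\ \ge\ \cos t\big\}\Big)\,\dd b\;,
\end{align*}
which is genuine work; as written the proof is incomplete at its crucial point. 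It is also unnecessary: the property one needs to propagate is much weaker, namely that a.e.\ point has an antipode. This transfers trivially through the suspension law of cosines: if $d_X\big((a,y),(b,y')\big)=\pi$ with $a,b\in(0,\pi)$, then
\begin{align*}
  -1=\cos a\cos b+\sin a\sin b\cos d_{X'}(y,y')\ \ge\ \cos(a+b)\ \ge\ -1
\end{align*}
forces $a+b=\pi$ and $d_{X'}(y,y')=\pi$; by Fubini, ``a.e.\ point of $X$ has an antipode'' becomes ``a.e.\ point of $X'$ has an antipode''. That weak property suffices to re-apply the maximal diameter theorem at every stage and, at the terminal stage where the dimension parameter lies in $[1,2)$, to exclude the interval $[0,\pi]$ (whose interior points have no antipode), leaving $\S^1$ and forcing $N$ to be an integer. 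This is precisely the paper's induction. Two smaller corrections: the chain terminates at $\S^1$, not $\S^0$ (Ketterer's theory requires the dimension parameter to be $\ge 1$), and the case distinction is between $N\in[1,2)$ (interval or $\S^1$) and $N\ge2$ (suspension), so after $\lfloor N\rfloor$ steps one lands in the one-dimensional case rather than descending to a two-point space.
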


In particular, we see that for $N\in\N$ the standard sphere $\S^N$ is
the unique maximizer of the expected distance between points and of
the variance among RCD$^*(N-1,N)$ spaces, choosing $f(r)=r$ or
$f(r)=r^2$ respectively. We also establish a corresponding almost
rigidity theorem, see Theorem \ref{thm:almost rigidity}. It is easy to
see that the extremum of $M_f$ among RCD$^*(N-1,N)$ spaces is attained
also for non-integer $N$. It would be an interesting question to
characterize the extremizers in this case.

The proof of Theorem \ref{thm:rigidity-more} will rely on the maximal
diameter theorem obtained by Ketterer \cite{Ket15} which in turn stems
from Gigli's non-smooth splitting theorem \cite{G13}. In fact, we will
see that $(i)$ will imply that $m$-a.e.~point in $X$ will have a
partner at the maximal distance $\pi$. Also, the other known rigidity
results for RCD$^*(K,N)$ spaces with $K>0$, namely Ketterer's
non-smooth Obata theorem \cite{Ket15a} for spaces with extremal spectral
gap and the rigidity of spaces saturating the Levy--Gromov
isoperimetric inequality \cite{CM17}, are based on the maximal diameter
theorem.

An analogous statement (with $M_f(X)\geq M_{f,N}^*$ in the place of $M_f(X)\leq M_{f,N}^*$) holds for strictly decreasing $f$. Of particular interest is the case  $f=\cos$ which leads to $M_{\cos,N}^*=0$. 

\begin{corollary}
 Let $(X,d, m)$ be an RCD$^*(N-1,N)$ space with $N\geq1$,
  $\text{diam}(X)\leq \pi$. Then the following are equivalent:
  \begin{itemize}
    \item[(i)] $\int_X\int_X \cos d (x,y)\, m(\dd x)\,m(\dd y)\le0$
    \item[(ii)] $N$ is an integer and $X$ is isomorphic to the sphere
      $\S^N$ with the round metric and a multiple of the volume measure.
  \end{itemize}
\end{corollary}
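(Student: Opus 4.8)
The plan is to deduce this corollary directly from Theorem \ref{thm:rigidity-more} applied to the function $f=\cos$, after passing to the ``decreasing'' variant announced just above the statement. Since $\cos$ is continuous and strictly decreasing on $[0,\pi]$, the function $g:=-\cos$ is continuous and strictly increasing, so Theorem \ref{thm:rigidity-more} applies to $g$ and yields $M_g(X)\le M_{g,N}^*$ together with the rigidity characterization of the equality case. Because $M_{-\cos}(X)=-M_{\cos}(X)$ and $M_{-\cos,N}^*=-M_{\cos,N}^*$ by the linear dependence of both averaged quantities on $f$, this inequality is equivalent to $M_{\cos}(X)\ge M_{\cos,N}^*$, with equality if and only if $N$ is an integer and $X$ is isomorphic to $\S^N$.

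Next I would record the elementary computation $M_{\cos,N}^*=0$. By definition it suffices to show that the numerator $\int_0^\pi \cos(r)\sin(r)^{N-1}\,\dd r$ vanishes. Splitting the integral at $\pi/2$ and using that $\tfrac1N\sin(r)^N$ is a primitive of $\cos(r)\sin(r)^{N-1}$, one finds $\int_0^{\pi/2}\cos(r)\sin(r)^{N-1}\,\dd r=\tfrac1N$ and $\int_{\pi/2}^{\pi}\cos(r)\sin(r)^{N-1}\,\dd r=-\tfrac1N$, whose sum is $0$; since the denominator $\int_0^\pi \sin(r)^{N-1}\,\dd r$ is strictly positive, this gives $M_{\cos,N}^*=0$.

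Combining these two observations, the theorem yields
\begin{align*}
  \int_X\int_X \cos d(x,y)\,m(\dd x)\,m(\dd y) = m(X)^2\,M_{\cos}(X)\ \ge\ m(X)^2\,M_{\cos,N}^*=0\;,
\end{align*}
so the double integral is always nonnegative. In particular, condition (i), namely that this integral is $\le0$, holds if and only if it equals $0$, i.e.\ if and only if $M_{\cos}(X)=M_{\cos,N}^*$. By the rigidity part of the decreasing variant of Theorem \ref{thm:rigidity-more} this is equivalent to $N$ being an integer and $X$ being isomorphic to the round sphere $\S^N$ with a multiple of the volume measure, which is exactly (ii). Conversely, if (ii) holds then the same equality case gives $M_{\cos}(X)=M_{\cos,N}^*=0$, so the integral equals $0\le0$ and (i) follows.

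The entire mathematical content lies in Theorem \ref{thm:rigidity-more}; once that is available, the only genuine steps here are the sign bookkeeping that turns the ``increasing'' statement into a ``decreasing'' one and the explicit evaluation $M_{\cos,N}^*=0$. I expect no serious obstacle: the former is a one-line consequence of the linearity of $M_f$ and $M_{f,N}^*$ in $f$, and the latter is the short computation above.
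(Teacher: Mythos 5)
Your proof is correct and matches the paper's intended derivation: the paper presents this corollary as an immediate consequence of the strictly decreasing variant of Theorem \ref{thm:rigidity-more} (obtained exactly as you do, by applying the increasing case to $-f$) together with the observation that $M_{\cos,N}^*=0$, which you verify by the same elementary integration. Your sign bookkeeping and the treatment of the equality case are both sound, so nothing is missing.
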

Note  the condition $\text{diam}(X)\leq \pi$ is only requested in the case $N=1$. In the case $N>1$, it already follows from the RCD$^*(N-1,N)$-condition.

In order to obtain Theorem \ref{thm:rigidity-cone}  from this Corollary, note that the
distance on the cone $Y$ is built from the distance on $X$ via the law
of cosines. We will show that as soon as
\begin{align*}
  a:=\int_X\cos\big(d(x,y)\big)m(\dd y)>0
\end{align*}
for some point $x\in X$ we have for $p=(r,x)$ in the cone $Y$ that
\begin{align*}
W_2(\hat P_t\delta_o,\hat P_t\delta_p)^2\leq d(o,p)^2-ca\sqrt{t} +
O(t)\;,
\end{align*}
for some constant $c>0$, which implies that $\vartheta(o,p)=+\infty$.

\subsection*{Organization}
\label{sec:orga}

The article is organized as follows. In Section \ref{sec:prelim} we
recall definitions and results concerning synthetic
curvature-dimension bounds for metric measure spaces, as well as the
notion of upper bounds on the Ricci curvature considered here. The
proof of Theorem \ref{thm:rigidity-more} will be given in Section
\ref{sec:rigid-sphere} together with 
corresponding almost rigidity statements. In Section
\ref{sec:rigid-cone} we give the proof of Theorem
\ref{thm:rigidity-cone}.

\section{Preliminaries}
\label{sec:prelim}

\subsection{Synthetic Ricci bounds for metric measure spaces}
\label{sec:ric}

We briefly recall the main definitions and results concerning
synthetic curvature-dimension bounds for metric measure spaces that
will be used in the sequel.

A metric measure space (mm-space for short) is a triple $(X,d,m)$
where $(X,d)$ is a complete and seperable metric space and $m$ is a
locally finite Borel measure on $X$. In addition, we will always
assume the integrability condition
$  \int_X\exp(-c d(x_0,x)^2)\dd m(x) < \infty
$
for some $c>0$ and $x_0\in X$. We denote by $\mathcal P_2(X)$ the
space of Borel probability measures on $X$ with finite second moment
and by $W_2$ the $L^2$-Kantorovich-Wasserstein distance.

The Boltzmann entropy of $\mu\in \mathcal P(X)$ is defined by
$  \ent(\mu) = \int \rho\log \rho \dd m\
$
provided $\mu=\rho m$ is absolutely continuous w.r.t.~$m$ and
$\int\rho(\log\rho)_+\dd m<\infty$; otherwise 
$\ent(\mu)=+\infty$.
The Cheeger energy  of  $f\in L^2(X,m)$ is defined by 
\begin{align*}
 \rm{Ch}(f) =
 \liminf_{\stackrel{g\to f\ \mathrm{in}\ L^2(X,m)}{g\in Lip(X,d)}}\frac12\int|\nabla g|^2\dd
    m\;,
 \end{align*}
 where  $|\nabla g|$ denotes the local Lipschitz
constant.
A mm-space is called \emph{infinitesimally
  Hilbertian} if $\rm{Ch}$ is quadratic. In this case, $\rm{Ch}$ gives
rise to a strongly local Dirichlet form. The associated generator
$\Delta$ is called the Laplacian and the associated Markov semigroup
$(P_t)_{t\geq0}$ on $L^2(X,m)$ is called the \emph{heat flow} on
$(X,d,m)$, see  \cite{AGS14} for more
details.

For $\kappa\in \R$ and $\theta\geq 0$ define the functions
\begin{align*}
 \mathfrak{s}_\kappa(\theta) &=
  \begin{cases}
    \frac{1}{\sqrt{\kappa}} \sin(\sqrt{\kappa}\theta)\;,& \kappa>0\;,\\
\theta\;, &\kappa=0\;,\\
 \frac{1}{\sqrt{-\kappa}} \sinh(\sqrt{-\kappa}\theta)\;,& \kappa<0\;,
  \end{cases}
  \end{align*} and $\mathfrak{c}_\kappa(\theta) =\frac{d}{d\theta}\mathfrak{s}_\kappa(\theta)$.
Moreover, for $t\in[0,1]$ define the distortion coefficients
\begin{align*}
  \sigma^{(t)}_\kappa(\theta)=
  \begin{cases}
    \frac{\mathfrak{s}_\kappa(t\theta)}{\mathfrak{s}_\kappa(\theta)}\;,& \kappa\theta^2\neq 0 \text{ and } \kappa \theta^2 <\pi^2\;,\\
    t\;, & \kappa\theta^2 =0\;,\\
    +\infty\;, & \kappa\theta^2\geq \pi^2\;.
  \end{cases}
\end{align*}

\begin{definition}
i) A metric measure space  satisfies the condition
  CD$^*(K,N)$ with $K\in\R$ and $N\in [1,\infty)$ if for each pair
  $\mu_0=\rho_0m$ and $\mu_1=\rho_1m\in \cP_2(X)$ there exists an
  optimal coupling $q$ of $\mu_0,\mu_1$ and a geodesic $\mu_t=\rho_tm$
  connecting them such that \begin{align*} \int
    \rho_t^{-\frac{1}{N'}}\rho_t\dd m \geq \int
    \Big[\sigma^{(1-t)}_{K/N'}\big(d(x_0,x_1)\big)\rho_0^{-\frac{1}{N'}}+
    \sigma^{(t)}_{K/N'}\big(d(x_0,x_1)\big)\rho_1^{-\frac{1}{N'}}\Big]\dd
    q(x_0,x_1) \end{align*} holds for all $t\in[0,1]$ and all
  $N'\geq N$, see
  \cite{BS}.
  
  ii)   \label{def:RCD}
 A mm-space satisfies the condition
  RCD$^*(K,N)$ for $K\in\R$ and $N\in[1,\infty)$ if it is
  infinitesimally Hilbertian and satisfies
  CD$^*(K,N)$.
    \end{definition} 
  
 It has been shown in \cite{EKS15}
that the RCD$^*(K,N)$ condition can be formulated equivalently in
terms of Evolution Variational Inequalities.
In particular, for each $\mu_0\in \cP_2(X)$
there exists a (unique) EVI gradient flow emanating in $\mu_0$,
 denoted by
$\hat P_t\mu_0$ and called the heat flow acting on measures.
For $\mu_0=f m$ with
$f\in L^2(X,m)$ it coincides with the heat flow \cite{AGS14},
i.e.~$\hat P_t(fm)=(P_tf)m$. 
 It has
been shown (\cite[Thm.~6.1]{AGS14a}, \cite[Thm.~7.1]{AGMR15}) that the RCD
condition entails several regularization properties for $P_t$. For
instance, $P_tf(x)=\int f \dd\hat P_t\delta_x$ holds for $m$-a.e.~for
every $f\in L^2(X,m)$. This representative of $P_tf$ has the strong
Feller property, that is $x\mapsto \int f\dd \hat P_t\delta_x$ is
bounded and continuous for any bounded $f\in L^2(X,m)$.  In particular, we have the
following  estimate for the quadratic variation.

\begin{lemma}\label{lem:conv_W2}
 Let $X$ be an RCD$^*(0,N)$ space. Then we have for $\mu\in\mathcal P_2(X)$ and all $t>0$:
 \begin{align*}
 W_2(\hat P_t\mu,\mu)^2\leq 2Nt\;.
  \end{align*}
\end{lemma}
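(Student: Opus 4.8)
The plan is to derive the estimate directly from the dimensional Evolution Variational Inequality characterizing RCD$^*(0,N)$ spaces, as established in \cite{EKS15}. Writing $U_N(\mu):=\exp\big(-\tfrac1N\ent(\mu)\big)\in[0,\infty)$ for the dimensional entropy functional, the EVI$_{0,N}$ property of the heat flow $\mu_t=\hat P_t\mu$ asserts that for every test measure $\sigma\in\mathcal P_2(X)$ and a.e.\ $t>0$
\begin{align*}
  \frac{\dd}{\dd t}\,\tfrac12 W_2(\mu_t,\sigma)^2 \;\leq\; N\Big(1-\frac{U_N(\sigma)}{U_N(\mu_t)}\Big)\;.
\end{align*}
The idea is to feed the starting point itself into this inequality, i.e.\ to take $\sigma=\mu=\mu_0$, so that the right-hand side is controlled by the dimension $N$ alone and the resulting differential inequality can be integrated from $0$ to $t$.

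Carrying this out, since $U_N\geq0$ one always has $U_N(\mu)/U_N(\mu_t)\geq0$ (with the convention that this ratio is $0$ when $\ent(\mu)=+\infty$, so the singular case $\mu\notin D(\ent)$ causes no trouble), and hence the bracket on the right is bounded by $1$. Thus $\frac{\dd}{\dd t}\,\tfrac12 W_2(\hat P_t\mu,\mu)^2\leq N$ for a.e.\ $t>0$, and integrating over $[0,t]$ while using $W_2(\mu_0,\mu)=0$ yields $\tfrac12 W_2(\hat P_t\mu,\mu)^2\leq Nt$, which is the claim. Here one uses that for $t>0$ the RCD$^*(0,N)$ regularization forces $\mu_t\ll m$ with finite entropy, so that the curve $t\mapsto\mu_t$ is locally absolutely continuous in $(\mathcal P_2,W_2)$; consequently $t\mapsto\tfrac12 W_2(\mu_t,\mu)^2$ is absolutely continuous on compact subintervals and the fundamental theorem of calculus applies.

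The only genuinely delicate point is pinning down the exact constant in the EVI, and this is precisely where the dimensional (as opposed to merely metric) nature of the bound enters. A useful consistency check, which also confirms that the constant $2N$ is sharp, is the model case $X=\R^{N}$ with Lebesgue measure: there $\hat P_t\delta_0$ is the Gaussian with density $(4\pi t)^{-N/2}\exp(-|x|^2/4t)$, whose second moment about the origin is exactly $2Nt$, so $W_2(\hat P_t\delta_0,\delta_0)^2=2Nt$ with equality. Equivalently, one may bypass the EVI and argue metrically: the heat flow is the $W_2$-gradient flow of $\ent$, so its metric speed equals $\sqrt{I(\mu_t)}$ with $I$ the Fisher information, and the dimensional Bochner inequality underlying RCD$^*(0,N)$ yields the sharp decay $I(\mu_t)\leq N/(2t)$; then $W_2(\hat P_t\mu,\mu)\leq\int_0^t\sqrt{I(\mu_s)}\,\dd s\leq\int_0^t\sqrt{N/(2s)}\,\dd s=\sqrt{2Nt}$ recovers the estimate, again with the extremal Gaussian saturating every inequality.
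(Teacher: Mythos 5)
Your proof is correct, and it draws on the same source as the paper---the dimensional theory of \cite{EKS15}---but through a different entry point. The paper's proof is a one-line citation of the two-time contraction estimate \cite[Thm.~4.1]{EKS15}, which for $K=0$ reads $W_2(\hat P_t\mu,\hat P_s\nu)^2\leq W_2(\mu,\nu)^2+2N(\sqrt t-\sqrt s)^2$, specialized to $\nu=\mu$ and $s\searrow0$. You instead integrate the EVI$_{0,N}$ inequality itself against the fixed test measure $\sigma=\mu$; your normalization $\frac{\dd}{\dd t}\tfrac12 W_2(\mu_t,\sigma)^2\leq N\big(1-U_N(\sigma)/U_N(\mu_t)\big)$ is indeed the correct $K=0$ specialization, and dropping the nonnegative ratio and integrating yields the constant $2N$ exactly. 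Since in \cite{EKS15} the contraction theorem is itself deduced from the EVI (by a more involved argument coupling two flows run at different times), your route is essentially a self-contained re-derivation of the one special case needed, and it makes transparent where $2N$ comes from; the paper's route buys the general two-time, $K\neq0$ statement for free, which however is not needed here. Two small caveats. First, if one reads the EVI of \cite{EKS15} as formulated only for test measures of finite entropy, your convention for the case $\ent(\mu)=+\infty$ should be replaced by the standard semigroup patch: apply the bound to $\mu_\epsilon=\hat P_\epsilon\mu$, which has finite entropy for $\epsilon>0$, to get $W_2(\hat P_{t+\epsilon}\mu,\hat P_\epsilon\mu)^2\leq 2Nt$, and then let $\epsilon\searrow0$ using continuity of the flow. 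Second, your closing ``metric'' argument via $I(\mu_t)\leq N/(2t)$ is a valid third route, but that Fisher-information decay is itself a nontrivial consequence of the dimensional condition (a Li--Yau-type estimate), so it should not be presented as more elementary than the EVI computation.
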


 \begin{proof} 
Choosing $K=0$, $\nu=\mu$ and $s=0$ (or more precisely, considering the limit $s\searrow0$) 
in \cite[Thm.~4.1]{EKS15} yields the claim.
\end{proof}
  
  The CD$^*(K,N)$
condition 
is a priori slightly weaker than the original condition CD$(K,N)$
given in \cite{St06}, where the coefficients
$\sigma^{(t)}_{K/N}(\theta)$ are replaced by
$\tau^{(t)}_{K/N}(\theta)=t^{1/N}\sigma^{(t)}_{K/(N-1)}(\theta)^{1-1/N}$.

 Recently, however,
  Cavaletti and Milman \cite{CM17} succeeded to show  that the condition
  CD$^*(K,N)$ is in fact equivalent to CD$(K,N)$ provided $(X,d,m)$ is non-branching -- which  in particular will be the case if it is infinitesimally Hilbertian.
  Thus in turn  RCD$^*(K,N)$ will imply the
sharp Bonnet-Myers diameter and Bishop-Gromov volume comparison
estimates, see also \cite{CS,St06} for an alternative argument.
 Given $x_0\in\rm{supp}[m]$ and $r>0$ we
denote by $v(r):=m\big(\bar B_r(x_0)\big)$ the volume of the closed
ball of radius $r$ around $x_0$ and by \begin{align*}
  s(r):=\limsup_{\delta\to0}\frac{1}{
    \delta}m\big(\overline{B_{r+\delta}(x_0)}\setminus
  B_r(x_o)\big) \end{align*} the volume of the corresponding
sphere.

\begin{proposition}\label{prop:BG} Assume that $(X,d,m)$ is non-branching and
  satisfies CD$^*(K,N)$. Then each bounded closed subset of
  $\rm{supp}[m]$ is compact and has finite volume. For each
  $x_0\in\rm{supp}[m]$ and $0<r\leq R\leq \pi\sqrt{N/(K\wedge 0)}$ we
  have \begin{align*} \frac{s(r)}{s(R)}\geq
    \left(\frac{\mathfrak{s}_{K/(N-1)}(r)}{\mathfrak{s}_{K/(N-1)}(R)}\right)^{N-1}
    \quad\text{ and }\quad \frac{v(r)}{v(R)}\geq
    \frac{\int_0^r\mathfrak{s}_{K/(N-1)}(t)^{N-1}\dd
      t}{\int_0^R\mathfrak{s}_{K/(N-1)}(t)^{N-1}\dd
      t}\;.  \end{align*} Moreover, if $K>0$ then $\rm{supp}[m]$ is
  compact and its diameter is bounded by
  $\pi\sqrt{N/K}$.
\end{proposition}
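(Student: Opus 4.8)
The plan is to derive all three assertions from the one-sided volume comparison encoded in the curvature-dimension bound, the non-branching hypothesis being what makes the relevant transport maps well defined $m$-almost everywhere. By the equivalence of CD$^*(K,N)$ and CD$(K,N)$ for non-branching spaces recalled above, I would first pass to the (formally stronger) condition CD$(K,N)$, whose distortion coefficients $\tau^{(t)}_{K/N}(\theta)=t^{1/N}\sigma^{(t)}_{K/(N-1)}(\theta)^{1-1/N}$ produce exactly the model density $\mathfrak{s}_{K/(N-1)}(\cdot)^{N-1}$ appearing on the right-hand sides of the claimed inequalities.

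The heart of the argument is the sphere estimate. Fixing $x_0\in\mathrm{supp}[m]$ and $0<r\le R$, I would transport, for small $\delta>0$, the normalized restriction of $m$ to the annulus $A_R:=\overline{B_{R+\delta}(x_0)}\setminus B_R(x_0)$ towards the tip $x_0$ along geodesics: writing each $y\in A_R$ on an (essentially unique, by non-branching) geodesic from $x_0$ and stopping at parameter $t=r/R$ defines a contraction sending $A_R$ into the annulus of radius $\approx r$. Applying the curvature-dimension inequality along this interpolation, and using non-branching to exclude collapse of the transported geodesics onto one another, the intermediate density picks up precisely the factor $\sigma^{(r/R)}_{K/(N-1)}(R)^{N-1}=\big(\mathfrak{s}_{K/(N-1)}(r)/\mathfrak{s}_{K/(N-1)}(R)\big)^{N-1}$; integrating over $A_R$ and letting $\delta\to0$ gives
\[
  s(r)\ \ge\ \Big(\frac{\mathfrak{s}_{K/(N-1)}(r)}{\mathfrak{s}_{K/(N-1)}(R)}\Big)^{N-1}\,s(R),
\]
which is the first comparison. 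The ball comparison then follows from the standard monotonicity lemma: the sphere estimate says that $\rho\mapsto s(\rho)/\mathfrak{s}_{K/(N-1)}(\rho)^{N-1}$ is non-increasing, and writing $v(\rho)=\int_0^\rho s(u)\,\dd u$ this transfers to monotonicity of $v(\rho)\big/\int_0^\rho \mathfrak{s}_{K/(N-1)}(u)^{N-1}\,\dd u$, which is exactly the asserted inequality.

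Finiteness, compactness and the diameter bound would follow by bookkeeping. Local finiteness of $m$ gives $v(r_0)<\infty$ for some small $r_0$, and rearranging the ball comparison as $v(R)\le v(r_0)\,\big[\int_0^R \mathfrak{s}_{K/(N-1)}(t)^{N-1}\dd t\big]\big/\big[\int_0^{r_0}\mathfrak{s}_{K/(N-1)}(t)^{N-1}\dd t\big]$ shows every bounded set has finite volume. The same comparison with $R=2r$ yields a locally uniform doubling bound, so a maximal $\varepsilon$-separated subset of a bounded set has pairwise disjoint balls of volume bounded below and total volume finite, hence is finite; thus bounded closed subsets of $\mathrm{supp}[m]$ are totally bounded and, by completeness, compact. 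For $K>0$ I would read the diameter bound off the distortion coefficient directly: two points at distance $\theta\ge\pi\sqrt{N/K}$ force the factor $\sigma^{(t)}_{K/N}(\theta)=+\infty$ in the CD$^*(K,N)$ inequality, incompatible with the finiteness of the interpolating densities, whence $\text{diam}(\mathrm{supp}[m])\le\pi\sqrt{N/K}$.

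The \textbf{main obstacle} is the sphere estimate, and specifically the rigorous passage from displacement convexity---a statement about transporting one absolutely continuous measure onto another---to a pointwise bound on $s(\cdot)$ produced by contracting an annulus onto the single point $x_0$. Making this precise requires the measure-contraction reformulation of the curvature-dimension bound, a careful use of non-branching to ensure the contraction is defined $m$-almost everywhere with no mass lost to branching, and control of the $\delta\to0$ limit. This is precisely the content of the generalized Bishop--Gromov theory, so in practice I would invoke \cite{St06} (together with the equivalence of \cite{CM17}, and \cite{CS} for an alternative route) to discharge these technical estimates rather than reprove them.
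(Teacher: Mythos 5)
Your proposal is correct and follows essentially the same route as the paper, which gives no self-contained proof of this Proposition: it derives it by citing the Cavalletti--Milman equivalence \cite{CM17} of CD$^*(K,N)$ and CD$(K,N)$ on non-branching spaces and then invoking the Bishop--Gromov and Bonnet--Myers estimates of \cite{St06}, with \cite{CS} as an alternative argument. Your sketch of the annulus-contraction argument behind the sphere comparison, the monotonicity lemma for the ball comparison, and the (non-sharp) diameter bound $\pi\sqrt{N/K}$ obtained from the blow-up of $\sigma^{(t)}_{K/N}(\theta)$ accurately fills in the content of exactly those citations, so there is nothing to correct.
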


\subsection{Upper Ricci bounds} \label{sec:upper}

Here, we briefly introduce the synthetic notion of upper Ricci
curvature bounds considered in this paper. For more details we refer to \cite{St17}.
Let us mention that there also other approaches in terms the behaviour of the entropy
along Wasserstein geodesics and their relations are discussed.

Let $(X,d,m)$ be an RCD$^*(K',N')$ mm-space and let $\hat P_t$ denote
the dual heat flow acting on measures. For points $x,y\in X$ we set
\begin{align*}
  \vartheta^+(x,y) := -\liminf_{t\to0}\frac{1}{t}\log\left(\frac{W_2(\hat P_t\delta_x,\hat P_t\delta_y)}{d(x,y)}\right)\;,\qquad
 \vartheta^*(x) := \limsup_{y,z\to x}\vartheta^+(y,z)\;.
\end{align*}

It is shown in \cite[Thm.~2.10]{St17} that a lower bound
$\theta^+(x,y)\geq K$ is equivalent to the RCD$^*(K,\infty)$ condition
and in particular to the Wasserstein contraction estimate $W_2(\hat P_t\mu,\hat P_t\nu)\leq e^{-Kt}W_2(\mu,\nu)$
for all $\mu,\nu\in\cP_2(X)$ and all $t>0$.

If $(X,d,m)=(M,d,e^{-V}\text{vol})$ is a smooth weighted Riemannian manifold we have the following precise estimate on $\vartheta^+$ in terms of the Bakry--\'Emery Ricci curvature $\Ric_f=\Ric +\text{Hess}f$.

\begin{theorem}[{\cite[Thm.~3.1]{St17}}]
  For all pairs of non-conjugate points $x,y\in M$
  \begin{align*}
    \Ric_f(\gamma) \leq \vartheta^+(x,y) \leq \Ric_f(\gamma) +\sigma(\gamma) \tan^2\left(\sqrt{\sigma(\gamma)}d(x,y)/2\right)\;,
  \end{align*}
  where $\gamma=(\gamma^a)_{a\in[0,1]}$ is the (unique) constant speed geodesic connecting $x$ and $y$, 
  \begin{align*}
    \Ric_f(\gamma) = \frac{1}{d(x,y)^2}\int_0^1\Ric_f(\dot\gamma^a,\dot\gamma^a)\dd a\;,
  \end{align*}
  and $\sigma(\gamma)$ denotes the maximal modulus of the Riemannian curvature along the geodesic $\gamma$.
\end{theorem}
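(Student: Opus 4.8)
The plan is to read $\vartheta^+(x,y)$ as the instantaneous rate of $W_2$-contraction of the two heat-diffused Dirac masses and to extract it from a short-time expansion of $W_2(\hat P_t\delta_x,\hat P_t\delta_y)^2$. Since $x$ and $y$ are non-conjugate, they are joined by a unique minimizing geodesic $\gamma$ and $\tfrac12 d(\cdot,\cdot)^2$ is smooth near $(x,y)$; writing $W_2^2 = d(x,y)^2(1-2\vartheta^+ t + o(t))$, the theorem reduces to two matching short-time estimates: an upper bound $W_2^2\le d(x,y)^2 - 2t\int_0^1\Ric_f(\dot\gamma^a,\dot\gamma^a)\dd a + o(t)$, and a lower bound in which the integral is augmented by the curvature term $\sigma(\gamma)\tan^2(\sqrt{\sigma(\gamma)}d(x,y)/2)\,d(x,y)^2$. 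Throughout I identify $\hat P_t\delta_z$ with the law of the diffusion $(X^z_s)$ generated by the weighted Laplacian $\Delta-\langle\nabla V,\nabla\rangle$, so that $W_2^2$ is the infimum of $\E[d(X^x_t,X^y_t)^2]$ over all couplings of the two diffusions.

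For the lower bound $\vartheta^+\ge\Ric_f(\gamma)$, equivalently the upper bound on $W_2^2$, I would use the synchronous coupling: drive $X^x$ and $X^y$ by Brownian increments that are parallel-transported into one another along the geodesic joining their current positions. Applying Itô's formula to $s\mapsto\tfrac12 d(X^x_s,X^y_s)^2$, the drift at $s=0$ equals the second variation of the energy of $\gamma$ under the variation moving both endpoints by a parallel orthonormal frame — i.e. the index form with vanishing covariant derivatives — plus the first-order contribution of the drift $-\nabla V$. The parallel frame makes the $|\nabla_{\dot\gamma}E_i|^2$ terms vanish, so summing $-\langle R(E_i,\dot\gamma)\dot\gamma,E_i\rangle$ over the frame returns exactly $-\int_0^1\Ric(\dot\gamma^a,\dot\gamma^a)\dd a$, while the drift contributes $-\int_0^1\text{Hess}\,V(\dot\gamma^a,\dot\gamma^a)\dd a$; together these give the coefficient $-2\int_0^1\Ric_f(\dot\gamma^a,\dot\gamma^a)\dd a$ of $t$ in $\E[d(X^x_t,X^y_t)^2]$. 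Since any coupling bounds $W_2^2$ from above, this yields the upper bound on $W_2^2$ and hence $\vartheta^+\ge\Ric_f(\gamma)$.

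For the upper bound $\vartheta^+\le\Ric_f(\gamma)+\sigma(\gamma)\tan^2(\sqrt{\sigma(\gamma)}d(x,y)/2)$ I need a matching lower bound on $W_2^2$, that is, to certify that no coupling contracts faster than the stated rate. The optimal coupling may contract faster than the synchronous one — the minimizing Jacobi-field index form lies below the parallel-field one — and I would quantify this excess by Kantorovich duality combined with Jacobi-field comparison. Exploiting the midpoint symmetry of the optimal coupling, the relevant comparison reduces to a half-geodesic of length $d(x,y)/2$, on which, bounding the sectional curvature along $\gamma$ by $\sigma(\gamma)$, the Riccati comparison equation $u'+u^2+\sigma(\gamma)=0$ governs the Hessian of the distance function and produces the factor $\sqrt{\sigma(\gamma)}\tan(\sqrt{\sigma(\gamma)}d(x,y)/2)$, whence the $\tan^2$ correction. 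Feeding this comparison bound into the transport cost gives the claimed lower bound on $W_2^2$.

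The main obstacle is precisely this upper bound on $\vartheta^+$, i.e. the lower bound on $W_2^2$: the lower bound on $\vartheta^+$ needs only one well-chosen coupling, whereas the upper bound must exclude all couplings, which forces a sharp comparison-geometry estimate together with tight control of the non-leading terms of the short-time heat kernel, so that the $o(t)$ remainder does not absorb the order-$t$ curvature correction. The non-conjugacy hypothesis enters exactly here: it keeps $\tfrac12 d^2$ smooth near $(x,y)$ and guarantees $\sqrt{\sigma(\gamma)}\,d(x,y)/2<\pi/2$, so that the $\tan^2$ term remains finite.
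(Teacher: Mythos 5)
A preliminary remark on ground truth: the paper does \emph{not} prove this statement. It is quoted, with attribution, from \cite[Thm.~3.1]{St17}, and no argument for it appears anywhere in the text; so there is no ``paper's own proof'' to compare yours against, and I can only assess your proposal on its own terms.

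Your first step is essentially the classical synchronous-coupling argument of von Renesse--Sturm, and as an outline it is sound. One imprecision: under the parallel-transport coupling, the drift of $s\mapsto d(X^x_s,X^y_s)$ is the \emph{true} second variation of distance, i.e.\ the index form of the \emph{Jacobi} field with the prescribed (parallel-transported) boundary values, and this is only bounded \emph{above} by the parallel-field index form; it does not equal it. Since you only need an upper bound on $\E\big[d(X^x_t,X^y_t)^2\big]$, the inequality points the right way and $\vartheta^+(x,y)\ge\Ric_f(\gamma)$ survives.

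The genuine gap is in your second step, and it is quantitative, not merely the vagueness of the sketch (though ``midpoint symmetry of the optimal coupling'' is never substantiated and optimal couplings of heat kernels have no such symmetry in general). You interpret $\sigma(\gamma)$ as a bound on the \emph{sectional} curvature along $\gamma$ and argue direction by direction via Riccati/Jacobi comparison. But the excess of the Jacobi index form over the parallel one occurs in \emph{every} one of the $n-1$ transverse directions ($n=\dim M$), so this route can only produce a correction of size $(n-1)\,\sigma\big(\tfrac{2\tan(\sqrt{\sigma}\,d(x,y)/2)}{\sqrt{\sigma}\,d(x,y)}-1\big)$, not the dimension-free term $\sigma\tan^2\big(\sqrt{\sigma}\,d(x,y)/2\big)$. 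This is a real obstruction: on the round sphere $\S^n$ (sectional curvature $1$, $V=0$), let $R$ be the rotation carrying $x$ to $y$; then $(X_t,RX_t)$ is an exact coupling of $\hat P_t\delta_x$ and $\hat P_t\delta_y$, and expanding the displacement function $z\mapsto d(z,Rz)$ around $x$ gives
\begin{align*}
  \E\big[d(X_t,RX_t)^2\big]
  = d(x,y)^2 - 4(n-1)\,d(x,y)\,\tan\big(d(x,y)/2\big)\,t + o(t)\;,
\end{align*}
hence $\vartheta^+(x,y)\ \ge\ 2(n-1)\tan\big(d(x,y)/2\big)/d(x,y)$. For $n\ge 5$ and $d(x,y)$ small this strictly exceeds $(n-1)+\tan^2\big(d(x,y)/2\big)=\Ric_f(\gamma)+\tan^2\big(d(x,y)/2\big)$. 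So under your sectional-curvature reading of $\sigma(\gamma)$ the inequality you set out to prove is \emph{false}, and no refinement of the duality argument can rescue it. The statement can only be correct when $\sigma(\gamma)$ is understood as a norm of the full curvature tensor along $\gamma$ --- a quantity which on $\S^n$ grows with $n$ --- and any correct proof must account precisely for how the per-direction excesses accumulate; this is the point your sketch never confronts. (A related minor slip: non-conjugacy of $x,y$ does not force $\sqrt{\sigma(\gamma)}\,d(x,y)/2<\pi/2$; when that quantity reaches $\pi/2$ the stated bound has to be read as vacuous.)
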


In particular one sees that an upper bound $\Ric_f\leq K$ for some $K\in R$ is equivalent to the estimate $\vartheta^*(x)\leq K$ for all $x\in M$. This motivates the following definition.

\begin{definition}
We say that a number $K\in\R$ is a synthetic upper Ricci bound for the mm-space $(X,d,m)$ if $\forall x\in X$
\begin{align*}
\vartheta^*(x) \leq K\;.
\end{align*}
\end{definition}

\subsection{Cones and  suspensions} \label{sec:cone-sus}
                                                                                                                                            
We recall the construction of cones for metric measure spaces.
  
\begin{definition}\label{def:cone}
For a metric measure space $(X,d_X,m_X)$ and $K\geq0$, $N\geq 1$ the $(K,N)$-cone $\Con^N_K(X)=(C,d_C,m_C)$ over $(X,d,m)$ is the defined by  $$C= \begin{cases}
       [0,\pi/\sqrt{K}]\times X/(\{0,\pi/\sqrt{K}\}\times X)\;,& K>0\;,\\
       [0,\infty)\times X/(\{0\}\times X)\;, & K=0\;,\\
     \end{cases}$$
     with
      $m_C(dr,dx)= \mathfrak{s}_K(r)^N dr\ m_X(dx)$ and $d_C$ given  for $(r,x),(s,y)\in C$ by 
    \begin{align*}
 d_C\big((r,x),(s,y)\big)
  =
    \begin{cases}
     \mathfrak{c}_K^{-1}\Big[\mathfrak{c}_K(r)\mathfrak{c}_K(s)+K\mathfrak{s}_K(r)\mathfrak{s}_K(s)\cos\big(d_X(x,y)\wedge\pi\big)\Big]\;,& K>0\;,\\
      \sqrt{r^2+s^2-2rs\cos\big(d_X(x,y)\wedge\pi\big)}\;, & K=0\;.
    \end{cases}
 \end{align*}
    
\end{definition}
We refer to the $(1,N)$-cone as the spherical suspension of $X$.

Curvature-dimension bounds for cones are intimately related to
curvature-dimension bounds for the base space. We recall the following result by Ketterer \cite{Ket15}.

\begin{theorem}\label{thm:ketterer_cones}
  Let $(X,d_X,m_X)$ be a metric measure space and let $K\geq 0$ and
  $N\geq1$. Then the $(K,N)$-cone $\Con^N_K(X)$ satisfies
  RCD$^*(KN,N+1)$ if and only if $X$ satisfies RCD$^*(N-1,N)$ and $\rm{diam}(X)\leq\pi$.
\end{theorem}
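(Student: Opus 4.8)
The plan is to transfer the curvature-dimension condition between the base $X$ and the cone $C=\Con^N_K(X)$ by exploiting the explicit warped-product structure of both the cone distance $d_C$ and the reference measure $m_C=\mathfrak{s}_K(r)^N\,\dd r\,m_X$, treating the purely metric CD$^*$ part and the infinitesimal Hilbertianity separately. A decisive preliminary point is that one should work with the \emph{reduced} condition CD$^*$ (the $\sigma$-coefficients) rather than with CD (the $\tau$-coefficients): the distortion coefficients $\sigma^{(t)}_\kappa$ factorize cleanly under the cone construction whereas the $\tau$-coefficients do not, and the equivalence of the two conditions recalled above then upgrades the statement where needed.

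First I would analyze geodesics in $C$. Using the cone distance formula --- the law of cosines in the model space of constant curvature $K$ --- a constant-speed geodesic between $(r_0,x_0)$ and $(r_1,x_1)$ with $d_X(x_0,x_1)\le\pi$ projects to a constant-speed geodesic in $X$ from $x_0$ to $x_1$, carrying an explicit radial profile $t\mapsto\rho(t)$ determined by $\mathfrak{s}_K$ and $\mathfrak{c}_K$. Lifting an optimal coupling and an $L^2$-Wasserstein geodesic on $X$ to $C$, and disintegrating each measure against $m_C$ into an angular part (a measure on $X$) and a radial density, I would reduce the cone CD$^*$ inequality to a pointwise statement along such geodesics.

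The heart of the ``if'' direction is then a single distortion identity. Along a cone geodesic the Jacobian of the interpolation factorizes into the angular Jacobian on $X$ and a radial factor built from $\mathfrak{s}_K(\rho(t))^N$; one wants the product of the base coefficient $\sigma^{(t)}_{(N-1)/N}\big(d_X(x_0,x_1)\big)$ with this radial factor to reproduce the $(N+1)$-dimensional cone coefficient $\sigma^{(t)}_{KN/(N+1)}\big(d_C\big)$. Written out this is a trigonometric identity in the constant-curvature model --- essentially a Jacobi-field computation --- that encodes simultaneously the curvature shift from $N-1$ to $KN$ and the dimension shift from $N$ to $N+1$, and verifying it is the main computational obstacle. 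Granting the identity, the CD$^*(KN,N+1)$ density inequality on $C$ reduces to the CD$^*(N-1,N)$ inequality on the base, so $C$ is CD$^*(KN,N+1)$ whenever $X$ is CD$^*(N-1,N)$ with $\mathrm{diam}(X)\le\pi$.

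It remains to match the Riemannian structure and to prove the converse. The Cheeger energy on $C$ splits, via the warped product, into a radial Dirichlet energy plus an $\mathfrak{s}_K(r)^{-2}$-weighted angular Cheeger energy, so $\mathrm{Ch}_C$ is quadratic iff $\mathrm{Ch}_X$ is; combined with the CD$^*$ transfer this yields the RCD$^*$ equivalence in the forward direction. For the converse I would test the cone condition on transports concentrated near a fixed radius $r$ and use the known scaling behaviour of CD$^*$ under the radial dilation of the angular slice $\{r\}\times X$; letting $r$ vary reproduces the base inequality and forces RCD$^*(N-1,N)$ on $X$. Finally, the bound $\mathrm{diam}(X)\le\pi$ is forced by the truncation $\cos\big(d_X(x,y)\wedge\pi\big)$ in $d_C$: a pair in $X$ at distance strictly greater than $\pi$ would make a positive-measure family of cone geodesics pass through, and branch at, the tip $o$, contradicting the essential non-branching of an infinitesimally Hilbertian CD$^*$ space. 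I expect the trigonometric distortion identity to be the one genuinely delicate step; the factorizations, the Cheeger splitting, and the slice argument are structural once the cone geodesics are understood.
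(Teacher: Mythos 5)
Your proposal takes a genuinely different route from the proof behind this theorem, and it has a gap at exactly the step you yourself flag as ``the main computational obstacle.'' Note first that the paper does not prove Theorem \ref{thm:ketterer_cones}: it is recalled from Ketterer \cite{Ket15}, and Ketterer's proof (like the paper's proof of the generalization, Theorem \ref{thm:generalized_cones}) is Eulerian, not Lagrangian. It identifies the Cheeger energy of the cone as a skew product of Dirichlet forms, computes $\Gamma_2$ on the cone explicitly, and transfers the Bochner inequality BE$(N-1,N)$ on $X$ to BE$(KN,N+1)$ on $\Con^N_K(X)$ and back, using the equivalence of RCD$^*(K,N)$ with the Bakry--\'Emery condition established in \cite{EKS15}.

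The gap in your Lagrangian scheme is this: verifying CD$^*(KN,N+1)$ on the cone requires the displacement inequality for \emph{arbitrary} pairs of measures on $\Con^N_K(X)$, whereas your reduction only treats measures and geodesics lifted from the base. General $W_2$-geodesics on the cone do not project to $W_2$-geodesics on $X$: a single cone geodesic projects to a base geodesic traversed with \emph{non-constant} angular speed (the angular speed depends on the radial profile of that particular geodesic, by the cone analogue of Clairaut's relation), so after disintegration the radial and angular Jacobians do not decouple, and there is no identity expressing $\sigma^{(t)}_{KN/(N+1)}\big(d_C\big)$ as a product of $\sigma^{(t)}_{(N-1)/N}\big(d_X\big)$ with a universal radial factor --- the angular distortion enters at a reparametrized time that varies from geodesic to geodesic. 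This is precisely the obstruction that limited Bacher--Sturm \cite{BS}, who introduced the transport approach you propose: they obtain the CD transfer only when the base is a smooth Riemannian manifold (where Jacobi-field/ODE comparison along the optimal map is available), and for general mm-space bases only the measure contraction property. In particular, your intermediate claim that $\Con^N_K(X)$ is CD$^*(KN,N+1)$ whenever $X$ is CD$^*(N-1,N)$ with $\mathrm{diam}(X)\le\pi$ --- i.e.\ the transfer \emph{without} infinitesimal Hilbertianity --- is not a known theorem; Hilbertianity is not a side condition that ``splits off'' via the Cheeger energy, it is what makes the known proof (through $\Gamma_2$-calculus) possible. Your converse has a related defect: slices $\{r\}\times X$ are not geodesically convex in the cone (geodesics between points of equal radius dip toward the tip), so the cone inequality cannot simply be restricted to a slice; Ketterer instead extracts the base Bochner inequality from the explicit cone $\Gamma_2$-formula by the local choice $u(r)=r$, as reproduced in the paper's proof of Theorem \ref{thm:generalized_cones}. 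Only your argument for the necessity of $\mathrm{diam}(X)\le\pi$ (branching through the tip) is essentially the one used in \cite{BS} and in Step 1a of that proof.
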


In fact, any curvature-dimension bound on the cone is sufficient to infer bounds on the base space as we will show here.
 More precisely, the following generalization holds.

\begin{theorem}\label{thm:generalized_cones}
  Let $(X,d_X,m_X)$ be a metric measure space and let $N\geq1$. Then the following statements are equivalent:
  \begin{itemize}
  \item[(i)] The $(K,N)$-cone $\Con^N_K(X)$ satisfies
  RCD$^*(K',N')$ for some $K'\in\R$ and $N'\geq N+1$.
  \item[(ii)] $X$ satisfies RCD$^*(N-1,N)$ and $\rm{diam}(X)\leq\pi$.
  \end{itemize}
  In this case $\Con^N_K(X)$ satisfies RCD$^*(KN,N+1)$.
\end{theorem}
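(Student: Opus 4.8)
The easy half of the statement requires no new work. If (ii) holds then Ketterer's Theorem~\ref{thm:ketterer_cones} already gives that $\Con^N_K(X)$ is RCD$^*(KN,N+1)$, which is a special case of RCD$^*(K',N')$ with $K'=KN$, $N'=N+1\ge N+1$; this yields (i) and simultaneously the final assertion. Hence the whole content is the implication (i)$\Rightarrow$(ii). Invoking Theorem~\ref{thm:ketterer_cones} once more, this is \emph{equivalent} to showing that as soon as $\Con^N_K(X)$ carries \emph{some} bound RCD$^*(K',N')$ with $N'\ge N+1$, it already carries the \emph{sharp} bound RCD$^*(KN,N+1)$. The guiding principle is that the exponent $N$ is hard-wired into the cone measure $\mathfrak{s}_K(r)^N\,\dd r\,m_X$ and pins the effective dimension at $N+1$ regardless of the value of $N'$.

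The plan is to reverse Ketterer's construction in three steps. First, infinitesimal Hilbertianity descends from the cone to the base: on the warped product one expects the squared minimal weak gradient to split as $|\partial_r f|^2+\mathfrak{s}_K(r)^{-2}|\nabla_X f|^2$, whose radial part is automatically quadratic, so the Cheeger energy of the cone can be quadratic only if that of $X$ is. Second, one must force $\text{diam}(X)\le\pi$. Were there $x,y\in X$ with $d_X(x,y)>\pi$, then by the law-of-cosines formula for $d_C$ every minimizer between $(r,x)$ and $(r,y)$ would be obliged to pass through the tip $o$ and could be prolonged beyond $o$ into a whole positive-measure family of directions, contradicting the essential non-branching entailed by RCD. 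This is only at stake when $N=1$; for $N>1$ the bound $\text{diam}(X)\le\pi$ follows a posteriori from the base condition RCD$^*(N-1,N)$ via sharp Bonnet--Myers (Proposition~\ref{prop:BG}).

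Third, and this is the heart of the matter, one derives CD$^*(N-1,N)$ for $X$. Since the cone is infinitesimally Hilbertian it is essentially non-branching, so by Cavalletti--Milman its CD$^*(K',N')$ condition coincides with the full CD$(K',N')$ condition and may be localized. Disintegrating the cone measure along the radial geodesics issuing from $o$ (the transport rays of the $1$-Lipschitz function $p\mapsto d_C(o,p)=r$) produces one-dimensional conditional densities equal to $\mathfrak{s}_K(r)^N$, which are exactly the model radial profiles for a space of dimension $N+1$, and which are compatible with the one-dimensional $(K',N')$-condition precisely when $N'\ge N+1$. Testing the cone inequality instead on plans concentrated near a fixed radius and moving in the $X$-directions should reverse Ketterer's distortion-coefficient identity: the cone coefficients built from the law of cosines degenerate into the base coefficients governing CD$^*(N-1,N)$ on $X$, the warping $\mathfrak{s}_K(r)^N$ supplies the curvature shift $N-1$, and the radial profile fixes the fiber dimension at $(N+1)-1=N$. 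With $X$ now RCD$^*(N-1,N)$ and $\text{diam}(X)\le\pi$, Theorem~\ref{thm:ketterer_cones} upgrades the cone to RCD$^*(KN,N+1)$, establishing both (ii) and the final assertion.

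The main obstacle is exactly this last extraction: in the synthetic calculus one must discard the slack between the given $N'$ and the effective value $N+1$, i.e.\ show that the \emph{exact} model density $\mathfrak{s}_K(r)^N$ in the radial needles forces the sharp fiber dimension $N$ rather than the naive $N'-1$. I expect this to hinge either on a careful bookkeeping in which the $N'$-dependence cancels when the radial factor is the extremal profile, or on the rigidity in the Bishop--Gromov inequality (Proposition~\ref{prop:BG}) triggered by the exactness of the $\mathfrak{s}_K^N$ volume profile at the tip.
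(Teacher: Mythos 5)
Your reduction of the problem is right: (ii)$\Rightarrow$(i) and the final assertion are immediate from Theorem~\ref{thm:ketterer_cones}, so everything rests on (i)$\Rightarrow$(ii), and your first two steps (infinitesimal Hilbertianity descending to $X$ via the warped-product splitting of the Cheeger energy, and the diameter bound via branching at the tip) are essentially sound and close to what the paper does, quoting Ketterer and Bacher--Sturm. The genuine gap is that the heart of the matter --- extracting the \emph{sharp} condition CD$^*(N-1,N)$ on the base from the slack condition RCD$^*(K',N')$ on the cone --- is never actually carried out: you describe what ``should'' happen when one reverses Ketterer's distortion-coefficient identity, and you yourself flag this extraction as the main obstacle, offering two speculative ways out without executing either. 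Moreover, the localization route you propose is doubtful on its face: the Cavalletti--Milman needles along the radial rays of $p\mapsto d_C(o,p)$ have conditional density $\mathfrak{s}_K(r)^N$ \emph{regardless} of the geometry of $X$ (any base with $\mathrm{diam}\le\pi$ produces the same radial profile), so they carry no information about the curvature of $X$; and for transport in the spherical directions the cone geodesics do not stay at fixed radius, so there is no clean one-dimensional reduction in which ``the cone coefficients degenerate into the base coefficients''. Reversing the displacement-convexity computation transversally is precisely the difficulty that forces Ketterer --- and this paper --- off the optimal-transport track.

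The paper's actual proof is analytic. For the Euclidean cone with $K'=0$ it passes to the Bakry--\'Emery condition BE$(0,N')$ and the explicit formula for $\Gamma_2^C$ on the warped product, then plugs in tensor test functions $u\otimes v$ with $u(r)=r$ locally. This yields an integrated Bochner-type inequality on $X$ with curvature term $N-1$ and dimension term $N$, but polluted by the error term $-\frac{N'-N}{N'N}\int (L^Xv+Nv)^2\phi\,\dd m_X$, which is exactly your ``slack'' made quantitative; it is removed not by pointwise bookkeeping (impossible, since the inequality is integrated) nor by Bishop--Gromov rigidity, but by deriving the corresponding gradient estimate for $P^X_t$ and following Ketterer's semigroup argument until the extra term drops out, giving RCD$^*(N-1,N)$. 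On top of this, the paper needs two reductions for which your proposal has no counterpart: a homothety argument for the Euclidean cone (scaling by $\Phi_\lambda$ shifts the entropy by $(N+1)\log\lambda$ and scales $W_2$ by $\lambda$, upgrading $(K',N')$-convexity to $(K'\lambda^2,N')$ and, letting $\lambda\to0$, to $(0,N')$) to handle arbitrary $K'\in\R$, and a blow-up plus pointed measured Gromov--Hausdorff stability argument to reduce the spherical case $K>0$ to the Euclidean one. Since your key step is never established, there is also no evidence that your route could absorb the arbitrary sign and size of $K'$, which these reductions exist to handle.
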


A close inspection of the proof in \cite{Ket15} reveals that, at least
in the case of the Euclidean cone $K=0$, the arguments there already
yield that RCD$*(0,N')$ on the cone implies RCD$*(N-1,N)$ on the base
space although this is not explicitely stated. Since the argument is
quite technical and involved we sketch the main steps for the reader's
convenience and highlight the modifications. See the proof of
\cite[Thm.~1.2]{Ket15} for more details. To obtain the statement in
the case $K>0$ and under the relaxed curvature bound $K'$ we provide
additional arguments.

\begin{proof}[Proof of Thm.~\ref{thm:generalized_cones}]
  We only need to treat the implication (i)$\Rightarrow$(ii). We proceed in three steps

  {\bf Step 1:} Let us first consider the case $K=0$ and assume that
  $\text{Con}_0^N(X)$ satisfies RCD$^*(0,N')$.

 {\bf a)} Following the argument of Bacher and Sturm in
  \cite{BS} one finds that the CD$^*(0,N')$ condition for
  $C=\Con_0^N(X)$ implies that $\rm{diam} X \leq \pi$ and hence $C$
  conincedes with the warped product $[0,\infty)\times^N_{\rm{id}} X$.
  Corollary 5.15 in \cite{Ket15} yields that $X$ is infinitesimally
  Hilbertian. Prop.~5.11, Cor.~5.12 in \cite{Ket15} show that the
  Cheeger energy of $C$ coincides with the skew product of the
  Dirichlet forms on $[0,\infty)$ and $X$ and that the intrinsic
  distance of the latter coincides with $d_C$.  Moreover, with
  $I=[0,\infty)$ one has
  $C^\infty_0(I)\otimes D(\Gamma_2^X\subset D(\Gamma_2^C)$ and
  $1\otimes D_+^{b,2}(L^X)\subset D_+^{b,2}(L^C)$. Finally,
  \cite[Thm.~4.26]{Ket15} yields that the Bakry--\'Emery condition
  BE$(0,N')$ holds for the Dirichlet form on $C$.

\medskip

{\bf b)} Following the proof of \cite[Thm.~3.23]{Ket15}, using
the explicit expression of the $\Gamma_2$-operator on $C$ (see (27) in
\cite{Ket15}):
\begin{align*}
  \Gamma_2^C(u\otimes v) &= \Big((u'')^2+\frac{N}{r^2}(u')^2\Big)v^2\\
                         &\quad+ \frac{1}{r^4}u^2\Gamma^X_2(v) - \frac{N-1}{r^4}u^2\Gamma^X(v)\\
                         &\quad+ \frac{2}{r^3}uu'L^X(v)v\\
                         &\quad+\Big(\frac{2}{r^2}(u')^2-\frac{4}{r^3}uu'+\frac{2}{r^4}u^2\Big)\Gamma^X(v)\;,
\end{align*}
choosing in particular $u(r)=r$ locally, and using the Bochner
inequality with parameters $(0,N')$ in $C$ one arrives at the
following integrated estimate for $v\in D(\Gamma_2^X)$ and test
function $\phi\in D_+^{b,2}(L^X)$:
\begin{align}\nonumber
 &\int L^X \phi \Gamma^X(v)\dd m_X - \int \Gamma^X(v,L^X v)\phi\dd m_X\\\nonumber
&\geq
 (N-1)\int\Gamma^X(v)\phi\dd m_X + \frac{1}{N'} \int \big(L^Xv +Nv\big)^2 \phi \dd m_X\\\nonumber
&\quad -\int \phi\big(v^2N + 2 v L^X v\big)\dd m_X\\\nonumber
&=  (N-1)\int\Gamma^X(v)\phi\dd m_X + \frac{1}{N} \int \big(L^Xv +Nv\big)^2 \phi \dd m_X\\\nonumber
&\quad -\int \phi\big(v^2N + 2 v L^X v\big)\dd m_X 
  - \frac{N'-N}{N'N}\int \big(L^Xv +Nv\big)^2 \phi \dd m_X\\\nonumber
&= (N-1)\int\Gamma^X(v)\phi\dd m_X + \frac{1}{N} \int \big(L^Xv\big)^2 \phi \dd m_X\\\label{eq:pre-Bochner}
&\quad - \frac{N'-N}{N'N}\int \big(L^Xv +Nv\big)^2 \phi \dd m_X\;.
\end{align}

\medskip

{\bf c)} It remains to get rid of the last term in
\eqref{eq:pre-Bochner} in order to conclude that $X$ satisfies
RCD$^*(N-1,N)$. For a given point $x_0$ one could simply replace $v$ by
$v-1/N L^Xv(x_0)$ in order to make the last term vanish at $x_0$ leaving all
other terms invariant. However, since the Bochner inequality is an
integrated estimate, more care is needed.

One deduces from \eqref{eq:pre-Bochner} the gradient estimate
\begin{align*}
  \left|\nabla P^X_t v\right|^2 +\frac{c(t)}{N}\left((L^XP^X_tv)^2-\frac{N'-N}{N}P^X_t(L^Xv+Nv)^2\right)\leq P^X_t\left|\nabla v\right|^2\;.
\end{align*}
From here one can follow the argument in \cite{Ket15} further to conclude the usual gradient estimate without the extra term  $-\frac{N'-N}{N}P^X_t(L^Xv+Nv)^2$ which in turn implies the RCD$^*(N-1,N)$ condition.

\medskip

{\bf Step 2:} Let us still consider the case $K=0$ but assume that
$\text{Con}_0^N(X)$ satisfies RCD$^*(K',N')$ for some $K'\in\R$.  For
$\lambda>0$ consider the homothety $\Phi_\lambda$ of
$\text{Con}_0^N(X)$ given by $\Phi_\lambda(s,y)=(\lambda s,y)$ and
note that it maps geodesics to geodesics. Consequently, also the
induced map from $\mathcal{P}(\text{Con}_0^N(X))$ to itself acting by
push-forward maps $W_2$-geodesics to $W_2$-geodesics. Let
$(\mu_t)_{t\in[0,1]}$ be a $W_2$-geodesic and let
$\mu^\lambda_t=(\Phi_\lambda)_\#\mu_t$. By the RCD$^*(K',N')$
condition the entropy is $(K',N')$-convex along the geodesic
$\mu^\lambda_t$. One finds that
$\text{Ent}(\mu^\lambda_t)=\text{Ent}(\mu_t)-(N+1)\log\lambda$ and
that $W_2(\mu_0^\lambda,\mu_1^\lambda)=\lambda W_2(\mu_0,\mu_1)$. This
implies $(K'\lambda^2, N')$-convexity of the entropy along the
original geodesic $(\mu_t)$. Since, $(\mu_t)$ was arbitrary, letting
$\lambda\to0$ yields that $\text{Con}_0^N(X)$ satisfies RCD$^*(0,N')$
and we conclude from the first step.

\medskip

{\bf Step 3:} Let us finally consider the case $K>0$ and assume that
$\text{Con}_K^N(X)$ satisfies RCD$^*(K',N')$. The result will follow
from a simple blow-up argument. Note that the pointed rescaled spaces $(\text{Con}_{K/n^2}^N(X),o)$
converge in pointed measured Gromov-Hausdorff sense to the pointed
Euclidean cone $(\rm{Con}^N_0(X),o)$ and that the they satisfy
RCD$^*(\frac{K'}{n^2},N')$. By the stability of the conditions
CD$^*(K,N)$ and RCD$^*(K,\infty)$ under pointed measured
Gromov-Hausdorff convergence (see \cite[Thm.~29.25]{Vil09} and
\cite[Thm.~7.2, Prop.~3.33]{GMS15}) we obtain that $\rm{Con}^N_0(X)$
satisfies RCD$^*(0,N')$. From the first part of the proof we infer
that $X$ satisfies RCD$^*(N-1,N)$.
\end{proof}

\section{Rigidity of the standard sphere}
\label{sec:rigid-sphere}

Here, we give the proof of the rigidity theorem for the standard
sphere, Theorem \ref{thm:rigidity-more}. Then, we formulate an almost
rigidity statement.


\begin{proof}[Proof of Theorem \ref{thm:rigidity-more}]

  Without restriction we can assume that $m(X)=1$. We consider the
  case that $f:[0,\pi]\to\R$ is continuous and strictly
  increasing. The case of decreasing $f$ then follows by considering
  $-f$. Possibly adding a constant to $f$ we can assume without
  restriction that $f\geq0$.

  Recall that the Bishop--Gromov volume comparison Proposition
  \ref{prop:BG} asserts that for any $x\in X$:
  \begin{align}\label{eq:BG-vol}
    \frac{m_X(\big(\bar B_r(x)\big)}{m_X(\big(\bar B_R(x)\big)}
     \geq \frac{\int_0^r\sin(t)^{N-1}\dd t}{\int_0^R\sin(t)^{N-1}\dd t}=:\frac{V_r^*}{V_R^*}\;.
  \end{align}
  Fix $x\in X$ and put $g(y)=f\big(d_X(x,y)\big)$. Using that
  $m_X(X)=1$ and $\text{diam}(X)\leq \pi$ we can estimate
  \begin{align*}
    \int_Xg(y)\dd m_X(y) &= \int_0^\infty m_X\big(\{g\geq s\}\big)\dd s
                         = \int_0^{\text{diam}(X)} m_X\big(\bar B^c_{f^{-1}(s)}(x)\big)\dd s\\
&= \int_0^{\text{diam}(X)} 1-m_X\big(\bar B_{f^{-1}(s)}(x)\big)\dd s
\leq \int_0^\pi 1-\frac{V^*_{f^{-1}(s)}}{V^*_{\pi}}\dd s\\
&=\Big[\int_0^\pi f\big(r\big)\sin(r)^{N-1}\dd r\Big]/\Big[\int_0^\pi\sin(r)^{N-1}\dd r\Big] = M^*_{f,N}\;.
  \end{align*}
  Integrating over $x$ then yields the first statement.

  Let us now prove the rigidity statement. From the above argument we
  obtain also that the equality $M_f(X)=M_{f,N}^*$ implies that for
  $m_X$ a.e.~point $x$ there must exist a point $x'$ with
  $d_X(x,x')=\pi$. This implies that $N$ is an integer and that $X$ is
  isomorphic to $\S^N$ by iteratively applying the maximal diameter
  theorem \cite[Thm.~1.4]{Ket15}. Indeed, recall that the existence of points $x_1,x_1'$ with $d_X(x_1,x_1')=\pi$ implies that 
  \begin{itemize}
  \item[(a)] if $N\in[1,2)$ then either $X$ is isomorphic to the
    interval $[0,\pi]$ or $N=1$ and $X$ is isomorphic to the circle
    $\S^1$ with normalized Hausdorff measure.
  \item[(b)] if $N\geq 2$, then $X$ is isomorphic to a spherical
    suspension $\Con_1^{N-1}(Y)$ for some RCD$^*(N-2,N-1)$
    space $(Y,d_Y,m_Y)$ with $\rm{diam}Y\leq \pi$ and $m(Y)=1$.
  \end{itemize}
  In case (a), we must have $N=1$ and $X$ isomorphic to $\S^1$ since
  otherwise there would be points that do not have a partner at
  distance $\pi$. In case (b) we pick $x_2\in X$ of the form
  $x_2=(\pi/2,y_2)$ and $x_2'$ such that $d_X(x_2,x_2')=\pi$. Then we
  have $x_2'=(\pi/2,y_2')$ and $d_Y(y_2,y_2')=\pi$. We then repeat the
  previous argument inductively. After $\lfloor N\rfloor$
  steps we arrive at case (a). Thus, we conclude that $N$ is an integer and that
 $X$ is the $N-1$ fold spherical suspension over $\S^1$,
 i.e.~isomorphic to $\S^N$.
\end{proof}


We have the following almost rigidity statement.

\begin{theorem}\label{thm:almost rigidity}
  For all $\epsilon>0$ and $N\geq1$ there exists $\delta>0$ depending
  only on $\epsilon$ and $N$ such that the following holds: If $X$ is
  an RCD$^*(N-1-\delta,N+\delta)$ space with $m(X)=1$ and $ M_f(X) \leq \delta$,
  then $N$ is an integer and $d_{\rm{mGH}}(X,\S^N)\leq \epsilon$, where
  $\S^N$ is the standard $N$-sphere with normalized volume.
\end{theorem}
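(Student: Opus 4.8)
The natural approach is a compactness-and-contradiction argument, combining the exact rigidity of Theorem~\ref{thm:rigidity-more} with the stability of the RCD$^*$ condition and the continuity of the functional $M_f$ under measured Gromov--Hausdorff (mGH) convergence. The plan is to assume the conclusion fails, extract a limiting mm-space that attains the extremal value $M_{f,N}^*$ exactly, and then invoke Theorem~\ref{thm:rigidity-more} to force this limit to be $\S^N$, contradicting the quantitative mGH-distance bound.

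Concretely, fix $\epsilon>0$ and $N\ge1$ and suppose no admissible $\delta$ exists. Then there is a sequence $\delta_n\to0$ and RCD$^*(N-1-\delta_n,N+\delta_n)$ spaces $X_n$ with $m(X_n)=1$ satisfying the near-extremal hypothesis $M_f(X_n)\le\delta_n$, but with either $N\notin\N$ or $d_{\rm{mGH}}(X_n,\S^N)>\epsilon$. Since the inequality part of Theorem~\ref{thm:rigidity-more} derives solely from the Bishop--Gromov comparison, it continues to hold for the perturbed parameters with $M_{f,N}^*$ replaced by a quantity converging to $M_{f,N}^*$ as $\delta_n\to0$; together with the hypothesis this squeezes $M_f(X_n)\to M_{f,N}^*$. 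The first real task is precompactness of $(X_n)$. For $N>1$ the sharp Bonnet--Myers bound of Proposition~\ref{prop:BG} gives, for $\delta_n$ small, a uniform diameter bound $\mathrm{diam}(X_n)\le\pi\sqrt{(N-1+\delta_n)/(N-1-\delta_n)}$, which together with $m(X_n)=1$ and the Bishop--Gromov comparison yields uniform doubling and hence, by Gromov's precompactness theorem for RCD spaces, a subsequence converging to some mm-space $X_\infty$. In the borderline case $N=1$ the curvature bound degenerates and gives no a priori diameter control, so one must invoke the standing hypothesis $\mathrm{diam}(X)\le\pi$.

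Passing to the limit, the stability of the curvature-dimension condition under mGH convergence (as used in the proof of Theorem~\ref{thm:generalized_cones}) shows $X_\infty$ is RCD$^*(N-1,N)$ with $m(X_\infty)=1$ and $\mathrm{diam}(X_\infty)\le\pi$. I would then check that $M_f$ is continuous along the convergence: realizing all $X_n$ and $X_\infty$ isometrically inside a common complete separable space $Z$ with $m_n\rightharpoonup m_\infty$ weakly, the product measures $m_n\otimes m_n$ converge weakly to $m_\infty\otimes m_\infty$ on $Z\times Z$, and since $f$ is continuous and bounded on $[0,\pi]$ the function $(x,y)\mapsto f(d_Z(x,y))$ is bounded and continuous; hence $M_f(X_n)\to M_f(X_\infty)$ and therefore $M_f(X_\infty)=M_{f,N}^*$.

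Now Theorem~\ref{thm:rigidity-more}, applied to the RCD$^*(N-1,N)$ space $X_\infty$ achieving equality, forces $N\in\N$ and $X_\infty$ isomorphic to $\S^N$. But then $d_{\rm{mGH}}(X_n,\S^N)=d_{\rm{mGH}}(X_n,X_\infty)\to0$, contradicting $d_{\rm{mGH}}(X_n,\S^N)>\epsilon$, while $N\in\N$ contradicts the alternative $N\notin\N$; this produces the required $\delta=\delta(\epsilon,N)$. I expect the two main obstacles to be the continuity of the double integral $M_f$ under mGH convergence, which requires upgrading plain GH convergence to weak convergence of the measures on a common ambient space, and the compactness step in the degenerate case $N=1$, where the lower Ricci bound provides no control on the diameter.
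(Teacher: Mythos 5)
Your proposal is correct and follows essentially the same compactness-and-contradiction argument as the paper's proof: extract a subsequential mGH-limit from the sequence of counterexamples, pass the curvature-dimension bounds and the functional $M_f$ to the limit, and apply the exact rigidity of Theorem \ref{thm:rigidity-more} to identify the limit with $\S^N$, contradicting $d_{\rm{mGH}}(X_n,\S^N)>\epsilon$ (resp.\ $N\notin\N$). The only difference is that you make explicit the steps the paper merely asserts, namely the precompactness of the class (Bonnet--Myers and Bishop--Gromov, with the implicit diameter bound $\mathrm{diam}\leq\pi$ handling the degenerate case $N=1$) and the continuity of $M_f$ under mGH convergence via weak convergence of the product measures in a common ambient space.
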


\begin{proof}
  Assume on the contrary that there is $\epsilon_0>0$ and a sequence
  $X_n$ of normalized RCD$^*(N-1-1/n,N+1/n)$ spaces with $M_f(X_n)\leq1/n$ and
  $d_{\rm{mGH}}(X_n,\S^N)\geq\epsilon_0$ for all $n$. By compactness of
  the class of RCD$^*(K,N)$ spaces, there exist a normalized RCD$^*(N-1,N)$
  space $X$ such that $X_n$ converges to $X$ in mGH-sense along a
  subsequence. Obviously, we still have
  $d_{\rm{mGH}}(X,\S^N)>\epsilon_0$. On the other hand, since $M_f$ is
  readily checked to be continuous w.r.t.~measured Gromov--Hausdorff
  convergence, $M_f(X) = \lim_n M_f(X_n) = 0$. But then, by the rigidity
  result Theorem \ref{thm:rigidity-more}, we have that $N$ is an
  innteger and that $X$ is isomorphic to $\S^N$, a contradiction.
\end{proof}

Let us give an alternative proof of Theorem \ref{thm:rigidity-more} in
the special case $f=\cos$ that will yield the rigidity of cones with
bounded Ricci curvature. In this case $M^*_{\cos,N}=0$. The proof is
based on a slightly different induction argument, noting the the
condition $M_{\cos}(X)=0$ directly implies $M_{\cos}(Y)=0$ if $X$ is a
suspension over $Y$.

\begin{proof}[Proof of Theorem \ref{thm:rigidity-more} for $f=\cos$]
  First note that by Bishop--Gromov volume comparison we
  have that for any $x_0\in X$:
 \begin{align*}
   \int_X\cos\big(d_X(x_0,y)\big)m_X(dy) \geq 0\;.
 \end{align*}
 Indeed, denote by $s(r)$ the volume of the sphere of radius $r$
 around $x_0$ in $X$. Since $X$ satisfies RCD$^*(N-1,N)$ the
 Bishop--Gromov volume comparison Proposition \ref{prop:BG} asserts that for all
 $0<r\leq R\leq \pi$:
 \begin{align}\label{eq:BG}
   \frac{s(r)}{s(R)} \geq
     \left(\frac{\sin(r)}{\sin(R)}\right)^{N-1}\;.
 \end{align}
 Thus we obtain
 \begin{align}\label{eq:BG-cos}
   \int_X\cos\big(d_X(x_0,y)\big)m_X(dy)
   &=\int_0^\pi\cos(r)s(r)dr\\\nonumber
   &= \int_0^{\pi/2}\cos(r)s(r)dr +\int_{\pi/2}^\pi\cos(r)s(r)dr\\\nonumber
   &=\int_0^{\pi/2}\cos(r)\big[s(r)-s(\pi-r)\big]dr\geq 0\;.
 \end{align}
 Here we have used that $\cos(r)=-\cos(\pi-r)$ and that
 $s(r)\geq s(\pi-r)$ for $r\leq \pi/2$ by \eqref{eq:BG}.
 

\medskip

The previous argument also shows that in order for $M_{\cos}(X)=0$ to
hold, for a.e.~$x\in X$ there must exist a point $x'\in X$ at maximal
distance, i.e.~with $d_X(x,x')=\pi$. The maximal diameter theorem
\cite[Thm.~1.4]{Ket15} again yields that one of the two cases (a), (b) above must hold and that in case (a)
 we must have that $N=1$ and $X$ is isomorphic to $\S^1$.
  
 In the case (b), we have from the definition of distance and measure
 in the spherical suspension:
  \begin{align*}
    0&=\int_X\int_X\cos\big(d_X(x,y)\big)m_X(\dd x)m_X(\dd y) \\
   &= \int_0^\pi \int_Y\int_0^\pi\int_Y\Big[\cos(r)\cos(s)+\sin(r)\sin(s)\cos\big(d_Y(\theta,\phi)\big)\Big]\\
 &\qquad\qquad\qquad\quad\times \sin(s)^{N-1}\sin(r)^{N-1}\dd s\;\dd r\; m_Y(\dd\theta)\;m_y(\dd \phi)\\
 &=A^2\int_Y\int_Y\cos\big(d_Y(\theta,\phi)\big)m_Y(\dd\theta)\;m_y(\dd \phi)\;,
  \end{align*}
 with
 \begin{align*}
   A=\int_0^\pi\sin(s)^N\dd s>0\;.
 \end{align*}
 This implies that also $M_{\cos}(Y)=0$ holds and we repeat
 the previous argument inductively. After $\lfloor N\rfloor$ steps we
 arrive at case (a) and conclude that $N$ is an integer and that
 $X$ is the $N-1$ fold spherical suspension over $\S^1$,
 i.e.~isomorphic to $\S^N$.
\end{proof}

\section{Rigidity of cones with bounded Ricci curvature}
\label{sec:rigid-cone}

Here, we give the proof of the rigidity result for cones with bounded
Ricci curvature, Theorem \ref{thm:rigidity-cone}.

A crucial ingredient in the proof will be the relation between the
vanishing of the integral
\begin{align}\label{eq:cosint0}
  \int_X\int_X\cos\big(d(x,y)\big)m(\dd x)m(\dd y)=0
\end{align}
and the asymptotic behaviour as $t\to0$ of
$W_2\big(\hat P_t \delta_o,\hat P_t\delta_q\big)$ for the vertex $o$
of the cone and any other point $q$. We will first prove the following
pointwise equivalence which is somewhat stronger than what is needed
in the proof of Theorem \ref{thm:rigidity-cone}.

\begin{proposition}\label{prop:cosint-asymptotic}
  Let $(X,d_X,m_X)$ be an RCD$^*(N-1,N)$ space with $N\geq1$ and
  $\rm{diam}(X)\leq \pi$. Then for any $p_0=(r_0,x_0)\in \Con_0^N(X)$
  and $o$ the vertex one of the following statements holds:
  \begin{itemize}
  \item[(i)] $\int_X\cos\big(d_X(x_0,y)\big)m_X(dy) =0$ and $\vartheta^+(o,p_0)=0$.
   \item[(ii)] $\int_X\cos\big(d_X(x_0,y)\big)m_X(dy)>0$ and $\vartheta^+(o,p_0)=+\infty$.
  \end{itemize}
\end{proposition}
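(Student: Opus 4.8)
The dichotomy is supplied by Bishop--Gromov comparison: exactly as in \eqref{eq:BG-cos}, for every $x_0\in\operatorname{supp}(m_X)$ the quantity $a(x_0):=\int_X\cos\big(d_X(x_0,y)\big)\,m_X(dy)$ is $\geq 0$, so either $a(x_0)=0$ or $a(x_0)>0$, and I need only treat the corresponding implication in each case. Throughout, the object to analyse is the short time behaviour of $W_2(\hat P_t\delta_o,\hat P_t\delta_{p_0})$, keeping in mind that $d_C(o,p_0)=r_0$ and that, since $\Con_0^N(X)$ is $\mathrm{RCD}^*(0,N+1)$ by Theorem \ref{thm:ketterer_cones}, the contraction estimate already yields $W_2(\hat P_t\delta_o,\hat P_t\delta_{p_0})\leq r_0$ and hence $\vartheta^+(o,p_0)\geq 0$ a priori. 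The structural fact I would use is that $\hat P_t\delta_o$ is \emph{radial}, of the form $h_t(\rho)\,m_C$: the cone Laplacian $\Delta_C=\partial_\rho^2+\tfrac{N}{\rho}\partial_\rho+\tfrac1{\rho^2}\Delta_X$ preserves functions depending only on $\rho$, so the heat flow leaves the closed subspace of radial functions invariant and carries the radial datum $\delta_o$ to a radial measure; in particular, conditionally on the radius, the $X$-coordinate of $\hat P_t\delta_o$ is distributed as $m_X/m_X(X)$.

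First I would treat the case $a(x_0)>0$, where the claim is $\vartheta^+(o,p_0)=+\infty$. Using the self-similarity of the Euclidean cone, $\hat P_t\big((\Phi_\lambda)_\#\mu\big)=(\Phi_\lambda)_\#\hat P_{t/\lambda^2}\mu$ for the homothety $\Phi_\lambda(\rho,x)=(\lambda\rho,x)$, the problem rescales to the regime $R:=r_0/\sqrt t\to\infty$ through the identity $W_2(\hat P_t\delta_o,\hat P_t\delta_{p_0})^2=t\,W_2\big(\hat P_1\delta_o,\hat P_1\delta_{(R,x_0)}\big)^2$. I would then exhibit an explicit coupling of $\hat P_1\delta_o$ and $\hat P_1\delta_{(R,x_0)}$ built from a monotone coupling of the two radial marginals together with the given angular marginals. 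Expanding the cone law of cosines $d_C((\rho,x),(s,y))^2=\rho^2+s^2-2\rho s\cos(d_X(x,y)\wedge\pi)$, and using that $\hat P_1\delta_{(R,x_0)}$ concentrates angularly within $d_X$-distance $O(1/R)$ of $x_0$, the cross term contributes $-2\,\bar a(x_0)\,\bar\rho\,R+O(1)$ with $\bar a(x_0)=a(x_0)/m_X(X)>0$ and $\bar\rho=\int\rho\,d(\hat P_1\delta_o)>0$. This gives $W_2(\hat P_t\delta_o,\hat P_t\delta_{p_0})^2\leq r_0^2-c\,a(x_0)\sqrt t+O(t)$ for some $c>0$, whence $\tfrac1t\log\big(W_2(\hat P_t\delta_o,\hat P_t\delta_{p_0})/r_0\big)\to-\infty$ and $\vartheta^+(o,p_0)=+\infty$.

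For the case $a(x_0)=0$ I would argue instead by a splitting. The equality case of \eqref{eq:BG-cos} forces $s(r)=s(\pi-r)$ for the sphere volumes around $x_0$, so $m_X$ charges distances arbitrarily close to $\pi$; by compactness of $\operatorname{supp}(m_X)$ this produces $x^*$ with $d_X(x_0,x^*)=\pi$. Ketterer's maximal diameter theorem \cite[Thm.~1.4]{Ket15} then shows that $X$ is a spherical suspension $\Con_1^{N-1}(Y)$ over some $\mathrm{RCD}^*(N-2,N-1)$ space $Y$ with $x_0$ one of the two poles (for $N\in[1,2)$ one lands instead in the model cases $X\cong[0,\pi]$ or $X\cong\S^1$). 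The key point is the isometry of metric measure spaces
\[
  \Con_0^N\big(\Con_1^{N-1}(Y)\big)\;\cong\;\R\times\Con_0^{N-1}(Y),\qquad
  (\rho,(\theta,y))\mapsto\big(\rho\cos\theta,\,(\rho\sin\theta,\,y)\big),
\]
checked directly from the law of cosines and the density $\rho^N\sin(\theta)^{N-1}$. Under it, $o$ and $p_0=(r_0,x_0)$ both lie on the fibre $\R\times\{o'\}$ over the vertex $o'$ of $\Con_0^{N-1}(Y)$, differing only in the $\R$-coordinate. Tensorisation of the heat flow on the product then gives $\hat P_t\delta_o=(\hat P_t^{\R}\delta_0)\otimes(\hat P_t'\delta_{o'})$ and $\hat P_t\delta_{p_0}=(\hat P_t^{\R}\delta_{r_0})\otimes(\hat P_t'\delta_{o'})$ with identical second factors, so $W_2(\hat P_t\delta_o,\hat P_t\delta_{p_0})=W_2(\hat P_t^{\R}\delta_0,\hat P_t^{\R}\delta_{r_0})=r_0$ for every $t>0$, and hence $\vartheta^+(o,p_0)=0$.

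The main obstacle is the rigorous justification of the coupling estimate when $a(x_0)>0$: one must bound the $O(1)$ remainder uniformly in $R$, quantify the angular concentration of $\hat P_1\delta_{(R,x_0)}$, and control the relevant second moments. This is where Gaussian-type heat kernel bounds on the cone, available since it is $\mathrm{RCD}^*(0,N+1)$, enter, together with the radiality of $\hat P_t\delta_o$ and the convergence of neighbourhoods of $(R,x_0)$ to a product $\R\times(R\cdot X)$ as $R\to\infty$. By contrast, the case $a(x_0)=0$ becomes soft once the suspension and the splitting of the cone are in place.
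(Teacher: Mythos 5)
Your proposal is correct in outline, but the two cases compare very differently with the paper. For $a(x_0)>0$ you are essentially reproducing the paper's argument (a coupling with independent angular components plus the law of cosines, extracting a gain of order $a(x_0)\sqrt{t}$), except that your rescaling to time $1$ manufactures an obstacle that the paper never faces: you need uniform-in-$R$ remainders and angular concentration of $\hat P_1\delta_{(R,x_0)}$, for which you invoke Gaussian heat kernel bounds. This is unnecessary. Working directly at time $t$ with the product coupling $\nu^t_{p_0}\otimes \nu^t_o$, the cross term factorizes \emph{exactly}, because the angular marginal of $\hat P_t\delta_o$ equals $m_X$ at every radius (Lemma \ref{lem:Bessel}, proved via uniqueness of EVI flows, not via kernel estimates); then $\int f\,d\nu^t_{p_0}\to r_0\,a(x_0)$ for $f(r,x)=r\int_X\cos\big(d_X(x,y)\big)\,m_X(dy)$, since $f$ is continuous of linear growth and $W_2(\nu^t_{p_0},\delta_{p_0})\to0$ by Lemma \ref{lem:conv_W2}, while the radial marginals are Bessel laws with $\int s\,d\bar\nu^t_o=c\sqrt{t}$ and $\int s^2\,d\bar\nu^t_{r_0}\leq r_0^2+Ct$. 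The same soft facts, pulled back through your scaling identity, would also complete your version; so the step you flag as the main difficulty is not a real one.

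For $a(x_0)=0$ you take a genuinely different route from the paper, and it works, with a stronger conclusion. The paper stays elementary: $\phi(s,y)=s\cos\big(d_X(x_0,y)\big)$ is $1$-Lipschitz on the cone (Lemma \ref{lem:1Lip}), Kantorovich--Rubinstein gives $W_2\geq W_1\geq \int\phi\,d(\nu^t_{p_0}-\nu^t_o)=\int\phi\,d\nu^t_{p_0}$ (the $\nu^t_o$-integral vanishes precisely because the angular marginal is $m_X$ and $a(x_0)=0$), and the law of cosines together with Lemma \ref{lem:conv_W2} yields $\int\phi\,d\nu^t_{p_0}=r_0+O(t)$; combined with the contraction estimate this gives $\vartheta^+(o,p_0)=0$. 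You instead invoke Ketterer's maximal diameter theorem to write $X\cong\Con_1^{N-1}(Y)$ with $x_0$ a pole, use the polar-coordinate isometry $\Con_0^N\big(\Con_1^{N-1}(Y)\big)\cong\R\times\Con_0^{N-1}(Y)$ (which you verify correctly, and which is indeed the splitting underlying Ketterer's own proof), and conclude $W_2\big(\hat P_t\delta_o,\hat P_t\delta_{p_0}\big)\equiv r_0$ by tensorization. This buys an exact identity rather than an $O(t)$ asymptotic, at the price of much heavier machinery (the maximal diameter theorem, hence Gigli's splitting theorem \cite{G13,Ket15}, plus tensorization of the heat flow on product RCD spaces, which needs a citation). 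Three points still need shoring up: the case $N\in[1,2)$, where $X\cong[0,\pi]$ is the suspension over a point and you must argue that $a(x_0)=0$ forces $x_0$ to be an endpoint; the assertion that the antipodal pair $x_0,x^*$ are the \emph{poles} of the suspension, which is true because Ketterer's construction splits the cone along the line formed by the two rays over $x_0$ and $x^*$, but is not automatic from the bare statement of his theorem (in a general suspension, non-pole points can also admit antipodes when ${\rm diam}(Y)=\pi$); and the identity $W_2(\mu_1\otimes\nu,\mu_2\otimes\nu)=W_2(\mu_1,\mu_2)$, which should be recorded via the product coupling for the upper bound and projection for the lower bound.
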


\begin{proof}
 {\bf Step 1:} Let us fix $p_0=(r_0,x_0)\in \Con_0^N(X)$.  Recall from \eqref{eq:BG-cos}  that
 \begin{align*}
   \int_X\cos\big(d_X(x_0,y)\big)m_X(dy) \geq 0\;.
 \end{align*}
  
 {\bf Step 2:} Let us assume first that
 $a:=\int_X\cos\big(d_X(x_0,y)\big)m_X(dy)>0$. We claim that as
 $t\to0$ we have
 \begin{align}\label{eq:OsqrtT}
   W_2(\hat P_t\delta_o,\hat P_t\delta_{p_0})^2\leq d_c(o,p_0)^2 - O\big(\sqrt{t}\big)\;,
 \end{align}
 which immediately implies that
 $-\partial_t^-\big\vert_{t=0} \log W_2\big(\hat P_t \delta_o,\hat
 P_t\delta_{p_0}\big) =+\infty$.
 To this end, denote by $\nu_p^t=\hat P_t\delta_p$ the heat kernel
 measure at time $t$ centered at $p=(r,x)\in\Con_0^N(X)$. Denote by
  $\bar\nu^t_{r}$ its marginal in the radial component. Further we
  consider the desintegration $\nu^t_{p,s}\in P(X)$ of $\nu^t_p$ after $\bar\nu^t_{r}$, i.e.~
  \begin{align*}
    \nu^t_p(ds,dy) = \bar\nu^t_{r}(ds)\nu^t_{p,r}(dy)\;.
  \end{align*}
 Lemma \ref{lem:Bessel} gives that for $p=o$ we have that $\nu^t_{o,r}=m_X$ is the uniform
  distribution on $X$.
  Let
now, $\pi=\nu^t_{p_0}\otimes\nu^t_o$ be the product coupling. We
obtain
  \begin{align*}
    W_2\big(\hat P_t \delta_o,\hat P_t\delta_{p_0}\big)^2
    &\leq \int d_C^2d\pi\\
    &= \int \big[r^2+s^2-2rs\cos\big(d_X(x,y)\big)\big]\nu^t_{p_0}(dr,dx)\nu^t_o(ds,dy)\\
 &= \int r^2 \bar\nu^t_{r_0}(dr) +\int s^2\bar\nu^t_{0}(ds)\\
&\qquad-2\int rs \cos\big(d_X(x,y)\big)\nu^t_{p_0}(dr,dx)d\bar\nu^t_{o}(ds)m_X(dy)\\
 &=\int r^2 \bar\nu^t_{r_0}(dr) +\int s^2\bar\nu^t_{0}(ds) -2 \int f d\nu^t_{p_0}\left(\int s\bar\nu^t_{0}(ds)\right)\;,
\end{align*}
where we have set $f(r,x)=r
\int_X\cos\big(d_X(x,y)\big)m_X(dy)$.
Obviously, $f$ is a continuous function on $\Con_0^N(X)$ with at most
linear growth. Since $W_2(\nu^t_{p_0},\delta_{p_0})\to0$ as $t\to 0$ by Lemma \ref{lem:conv_W2} we have that
\begin{align*}
  \int f d\nu^t_{p_0} &= f(p_0) +o(1) = r_0\int_X\cos\big(d_X(x_0,y)\big)m_X(dy) +o(1) = r_0 a +o(1)\;.
\end{align*}
Thus, using the moment estimates from Lemma \ref{lem:Bessel} we obtain
\begin{align*}
  W_2\big(\hat P_t \delta_o,\hat P_t\delta_{p_0}\big)^2
&\leq r_0^2+2Ct -2c\sqrt{t}\big(r_0a+o(1)\big)\;.
\end{align*}
for suitable constants $C,c>0$. This proves \eqref{eq:OsqrtT}.

\medskip
 
{\bf Step 3:} Let us now assume that
$\int_X\cos\big(d_X(x_0,y)\big)m_X(dy)=0$. We claim that 
\begin{align}\label{eq:OT}
   W_2(\hat P_t\delta_o,\hat P_t\delta_{p_0})\geq d_C(o,p_0) + O\big(t\big)\;,
\end{align}
which immediately implies that
$-\partial_t^-\big\vert_{t=0} \log W_2\big(\hat P_t \delta_o,\hat
P_t\delta_{p_0}\big) \leq 0$.
To this end, consider the function $\phi:\Con_0^N(X)\to\R$ given by
$\phi(s,y)=s\cos\big(d_X(x_0,y)\big)$. By Lemma \ref{lem:1Lip}, $\phi$
is $1$-Lipschitz w.r.t.~the cone distance. Hence, by
Kantorovich--Rubinstein duality, we obtain
\begin{align*}
  W_2(\hat P_t\delta_o,\hat P_t\delta_{p_0})
  &\geq W_1(\hat P_t\delta_o,\hat P_t\delta_{p_0})
    \geq \int_{\Con_0^N(X)} \phi\;d(\nu^t_{p_0}-\nu^t_o)
    = \int \phi\;d\nu^t_{p_0} =:g(t)\;.
\end{align*}
Using the definition of the cone distance we write
\begin{align*}
  2r_0g(t)
 &= -\int d_C(p_0,\cdot)^2\;d\nu^t_{p_0} + r_0^2 + \int s^2\;\nu^t_{p_0}(ds,dy)\\
&=  -\int d_C(p_0,\cdot)^2\;d\nu^t_{p_0} + 2r_0^2 +Nt\;.
\end{align*}
By Lemma \ref{lem:conv_W2} we have as $t\to0$ that
\begin{align*}
  \int d_C(p_0,\cdot)^2\;d\nu^t_{p_0} = O(t)\;.
\end{align*}
Thus, $g(t)=r_0+O(t)$ which yields \eqref{eq:OT}.

\medskip

{\bf Step 4:} Finally, recall that the RCD$^*(0,N+1)$ property of
$\Con_0^N(X)$ implies the contraction estimate
\begin{align*}
    W_2(\hat P_t\delta_p,\hat P_t\delta_q)\leq d_C(p,q)\quad\forall p,q\in C\;,
\end{align*}
which implies that $-\partial_t^-\big\vert_{t=0} \log W_2\big(\hat P_t \delta_o,\hat
P_t\delta_{p_0}\big) \geq 0$.
\end{proof}

\begin{proof}[Proof of Thm.~\ref{thm:rigidity-cone}]
  {\bf (i) $\Rightarrow$ (ii):} By Theorem
  \ref{thm:generalized_cones}, $X$ satisfies
  RCD$^*(N-1,N)$. Moreover, the assumption that $\Ric Y<K''$ implies that 
 there exists $q$ such that 
 \begin{align*}
   -\partial_t^-\big\vert_{t=0} \log W_2\big(\hat P_t \delta_o,\hat P_t\delta_{p_0}\big) <+\infty\;.
 \end{align*}
  Thus, Proposition \ref{prop:cosint-asymptotic} yields that 
  \begin{align*}
    \int_X\cos\big(d_X(x,y)\big)m_X(\dd y)=0 \qquad\forall x\in X\;, 
  \end{align*}
  and in particular \eqref{eq:cosint0} holds. Theorem
  \ref{thm:rigidity-more} with $f=\cos$ yields that $N$ is an integer and $X$ is
  isomorphic to $\S^N$ with the round metric and a multiple of the volume measure. Hence $X$ is isomorphic to $\R^{N+1}$ with Euclidean distance and a multiple of the Lebesgue measure.

\medskip

{\bf (ii)$\Rightarrow$ (i):} If $Y$ is isomorphic to $\R^{N+1}$, it
satisfies RCD$^*(0,N+1)$ and it is isomorphic to the $N$-cone
$\Con_0^N(\S^N)$. Moreover, we have that
  \begin{align*}
W_2\big(\hat P_t \delta_p,\hat P_t\delta_q\big)=d_Y(p,q)
  \end{align*}
  for all $p,q$ and hence (i) follows.
\end{proof}

\begin{lemma}\label{lem:Bessel}
  Let $\nu^t_p=\hat P_t\delta_p$ for $p=(r,x)$ and denote by
  $\bar\nu^t_{r}$ its marginal in the radial component. Further, let $\nu^t_{p,s}\in P(X)$ be the desintegration of $\nu^t_p$ after $\bar\nu^t_{r}$, i.e.~
  \begin{align*}
    \nu^t_p(ds,dy) = \bar\nu^t_{r}(ds)\nu^t_{p,r}(dy)\;.
  \end{align*}
  Then there are constants $c,C>0$ such that 
 \begin{align*}
   \int s^2 \dd\bar\nu^t_r(s) &\leq r^2 + C t\;,\\
   \int s \dd\bar\nu^t_o(s) &= c\sqrt{t} \;.
  \end{align*}
  Mor precisely, the constants are given by $C=\lceil N\rceil$ and
  $c=\int s \dd\bar\nu^1_o(s)$. Furthermore, for $p=o$ we have that
  $\nu^t_{o,r}=m_X$ is the uniform distribution on $X$.
\end{lemma}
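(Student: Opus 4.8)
The plan is to exploit the skew-product (warped-product) structure of the Cheeger energy on $\Con_0^N(X)$ recalled in the proof of Theorem~\ref{thm:generalized_cones}: the Dirichlet form on the cone decomposes into a one-dimensional radial form on $([0,\infty),r^N\,\dd r)$, whose generator is the Bessel generator $L_r=\partial_r^2+\tfrac{N}{r}\partial_r$ of dimension $N+1$, and a rescaled angular form on $X$. Write $\Pi\colon(s,y)\mapsto s$ for the radial projection and let $A$ denote the angular average $Au(s,y)=\tfrac{1}{m_X(X)}\int_X u(s,z)\,m_X(\dd z)$, i.e.\ the orthogonal projection of $L^2(m_C)$ onto radial functions. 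First I would show that $A$ commutes with the heat semigroup $P^C_t$: since $\nabla(Au)$ has no angular component and $A$ is self-adjoint, the skew-product structure yields $\mathcal E_C(Au,v)=\mathcal E_C(u,Av)$, so $A$ commutes with $\mathcal E_C$ and hence with $P^C_t$. Consequently $P^C_t$ preserves radial functions and acts on them as the Bessel semigroup generated by $L_r$. Testing $\nu^t_p=\hat P_t\delta_p$ against a radial function $w\circ\Pi$ gives $\int w\,\dd\bar\nu^t_r=P^C_t(w\circ\Pi)(p)$, which is again radial and therefore depends only on the radial coordinate $r$ of $p$; this both justifies the notation $\bar\nu^t_r$ and identifies it as the transition kernel of the Bessel$(N+1)$ diffusion started at $r$.

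For the second moment I would apply this with $w(s)=s^2$. Since $L_r(s^2)=2+2N$ is a bounded constant and the Bessel process does not explode, a truncation/Dynkin argument gives $\tfrac{\dd}{\dd t}\int s^2\,\dd\bar\nu^t_r=2(N+1)$, hence $\int s^2\,\dd\bar\nu^t_r=r^2+2(N+1)t$; in particular the asserted bound $\int s^2\,\dd\bar\nu^t_r\le r^2+Ct$ holds with an explicit $N$-dependent constant. (For $r=0$ the upper bound is in any case immediate from Lemma~\ref{lem:conv_W2} applied to the RCD$^*(0,N+1)$ cone, because $\int s^2\,\dd\bar\nu^t_o=W_2(\hat P_t\delta_o,\delta_o)^2$.)

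The angular uniformity $\nu^t_{o,r}=m_X$ at the tip follows from the same commutation $AP^C_t=P^C_tA$: since $o$ carries no angular variable, $A$ acts trivially there, so for every bounded test function $u$ one has $\int u\,\dd\nu^t_o=P^C_tu(o)=P^C_t(Au)(o)=\int Au\,\dd\nu^t_o$, i.e.\ $\nu^t_o$ sees $u$ only through its angular average and hence has uniform angular conditionals. For the first moment I would invoke the self-similarity already used in Step~2 of the proof of Theorem~\ref{thm:generalized_cones}: the homothety $\Phi_\lambda(s,y)=(\lambda s,y)$ satisfies $\Delta_C\circ\Phi_\lambda^*=\lambda^2\,\Phi_\lambda^*\circ\Delta_C$, so that $\hat P_t(\Phi_\lambda)_\#\mu=(\Phi_\lambda)_\#\hat P_{t/\lambda^2}\mu$. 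As $\Phi_\lambda$ fixes the tip, taking $\mu=\delta_o$ and $\lambda=\sqrt t$ gives $\nu^t_o=(\Phi_{\sqrt t})_\#\nu^1_o$; thus $\bar\nu^t_o$ is the $\sqrt t$-dilation of $\bar\nu^1_o$ and $\int s\,\dd\bar\nu^t_o=\sqrt t\int s\,\dd\bar\nu^1_o=c\sqrt t$ with $c=\int s\,\dd\bar\nu^1_o$.

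The main obstacle is making these formal identities rigorous in the RCD/Dirichlet-form setting: establishing that the angular average $A$ genuinely commutes with $P^C_t$ (which I would extract from the skew-product decomposition of the Cheeger energy in \cite{Ket15}), and justifying the moment identity for the unbounded observable $s^2$ (which I would handle by truncation, using the finiteness of second moments guaranteed by Lemma~\ref{lem:conv_W2}). Once the commutation of $A$ with the heat flow is in place, the identification of the radial marginal as a Bessel kernel, the angular uniformity at the tip, and the self-similar first-moment scaling all follow as comparatively soft consequences.
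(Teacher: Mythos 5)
Your proposal is correct, and all four assertions of the lemma come out of it, but the machinery you use for the central identification is genuinely different from the paper's. The paper identifies the radial marginal $\bar\nu^t_r$ with the heat flow on the model space $B=\big([0,\infty),|\cdot|,r^N\,\dd r\big)$ by projecting the Evolution Variational Inequality: $\nu^t_p$ satisfies EVI on the cone, its radial projection is checked to satisfy EVI on $B$, and uniqueness of EVI gradient flows concludes; the same EVI--uniqueness device gives the tip statement, by verifying that $\bar\nu^t_0(\dd s)\,m_X(\dd y)$ is itself an EVI flow on the cone. You instead obtain both facts from the commutation of the angular-averaging projection $A$ with $P^C_t$, extracted from Ketterer's skew-product decomposition of the Cheeger energy --- a purely Dirichlet-form-theoretic route which packages the Bessel identification and the angular uniformity at the tip into a single statement, at the price of the domain issues you correctly flag ($A$ preserving $D(\mathcal E_C)$, and radial functions in $D(\mathcal E_C)$ matching $D(\mathcal E^B)$); the paper's EVI route sidesteps exactly these. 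For the moments the two proofs also diverge: the paper bounds the second moment by comparison with integer-dimensional Bessel processes (absolute values of Brownian motions) and invokes Bessel self-similarity for the first moment, whereas you use the generator identity $L_r(s^2)=2(N+1)$ via Dynkin's formula, and the cone homothety $\Phi_{\sqrt t}$ intertwining with the heat flow, which gives the stronger statement $\nu^t_o=(\Phi_{\sqrt t})_\#\nu^1_o$ for the whole measure rather than just its radial marginal. Your computation in fact yields the correct constant $C=2(N+1)$ for the heat-semigroup normalization $\partial_t u=\Delta u$; the paper's stated value $C=\lceil N\rceil$ overlooks both the dimension shift (the weight $r^N\,\dd r$ corresponds to an $(N+1)$-dimensional Bessel process) and the factor two between the heat semigroup and Brownian motion --- though this is immaterial, since the lemma and its application in Proposition~\ref{prop:cosint-asymptotic} only require some linear-in-$t$ bound with no $\sqrt t$ cross term, which both arguments deliver.
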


\begin{proof}
  First note that $\bar\nu^t_r$ coincides with the heat flow in the
  RCD$^*(0,N+1)$ space $B=\big([0,\infty),|\cdot|,r^N\dd r\big)$. To see
  this, recall that $\nu^t_p$ satisfies the Evolution Variational
  Inequality EVI$_{(0,\infty)}$ on $\text{Con}_N(X)$. One can check from this that the projection $\bar\nu^t_r$ satisfies EVI$_{(0,\infty)}$
  on $B$ and conlude by recalling that the heat flow is the unique
  solution to EVI.

  Now, the Cheeger energy on $B$ is given by the
  closure of the quadratic form $\mathcal{E}^B$ on
  $C^\infty_c(0,\infty)$ given by
  \begin{align*}
    \mathcal{E}^B(u)=\int_0^\infty |u'|^2(r)r^N\dd r\;,
  \end{align*}
  see e.g.~\cite[Sec.~2.3]{Ket15}. It follows that $\bar\nu^t_r$
  coincides with the law of the $N$-dimensional Bessel process started
  from $r$. To obtain the first estimate, one uses that the second
  moment of the $N$-dimensional Bessel process is controlled from above by the one of the $M$-dimensional Bessel process if $N\leq M$ and that
  for $N\in\N$ the $N$-dim. Bessel process is obtained as the absolute
  value of a $N$-dimensional Brownian motion.

  To obtain the second statement, one employs the scaling property of
  the Bessel process $(X_t)$ starting form $0$. Namely, the law of
  $X_t$ coincides with the image of the law of $X_1$ under the
  homothety $r\mapsto\sqrt{t}r$.
  
  Finally, the last statement is obtained by noting that the measures
  $\mu^t$ on $\text{Con}_N(X)$ given by
  $\mu^t(\dd s,\dd y):=\bar\nu^t_{0}(\dd s)m_X(\dd y)$ satisfy
  EVI$_{(0,\infty)}$ which follows from the correspoding property of
  $\bar\nu^t_0$ on $B$. Thus by uniqueness $\mu^t$ conicides with
  $\nu^t_o$.
\end{proof}

\begin{lemma}\label{lem:1Lip}
  Let $(X,d_X,m_X)$ be a metric measure space with
  $\rm{diam}(X)\leq \pi$ and $x\in X$. Then the function $\phi:\Con_0^N(X)\to\R$ given by
   \begin{align*}
    (s,y)\mapsto s\cos\big(d_X(x,y)\big)
  \end{align*}
  is $1$-Lipschitz w.r.t.~the cone distance.
\end{lemma}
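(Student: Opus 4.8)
The plan is to verify the defining two-point inequality directly, reducing it to an elementary algebraic fact. Fix two points $p=(r,x_1)$ and $q=(s,x_2)$ of $\Con_0^N(X)$ and abbreviate $a=d_X(x,x_1)$, $b=d_X(x,x_2)$ and $c=d_X(x_1,x_2)$. Since $\mathrm{diam}(X)\le\pi$, all three of $a,b,c$ lie in $[0,\pi]$, so the truncation $\wedge\pi$ appearing in the cone distance is inactive and $d_C(p,q)^2=r^2+s^2-2rs\cos c$. As both $|\phi(p)-\phi(q)|$ and $d_C(p,q)$ are nonnegative, the desired $1$-Lipschitz bound $|\phi(p)-\phi(q)|\le d_C(p,q)$ is equivalent to the squared inequality
\[
\big(r\cos a-s\cos b\big)^2\ \le\ r^2+s^2-2rs\cos c .
\]

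Expanding the left-hand side and cancelling $r^2\cos^2 a+s^2\cos^2 b$ against $r^2+s^2$ on the right (using $\cos^2=1-\sin^2$), I would rewrite this as
\[
r^2\sin^2 a+s^2\sin^2 b+2rs\cos a\cos b\ \ge\ 2rs\cos c .
\]
The only geometric input needed is the triangle inequality $c\ge|a-b|$ in $(X,d_X)$: since $\cos$ is decreasing on $[0,\pi]$ and both $c$ and $|a-b|$ lie in that interval, this yields $\cos c\le\cos(a-b)=\cos a\cos b+\sin a\sin b$. Because $r,s\ge 0$ are radial coordinates, multiplying by $2rs$ preserves the direction, so it suffices to prove $r^2\sin^2 a+s^2\sin^2 b+2rs\cos a\cos b\ge 2rs\cos a\cos b+2rs\sin a\sin b$, i.e.\ $r^2\sin^2 a+s^2\sin^2 b\ge 2rs\sin a\sin b$. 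The latter is just $(r\sin a-s\sin b)^2\ge 0$, which holds trivially, completing the argument.

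There is essentially no obstacle here: the statement is a pointwise, coordinatewise estimate and the proof is a short computation. The two points deserving a moment's care are the bookkeeping of the $\wedge\pi$ truncation, which the diameter bound renders harmless, and the correct direction of the cosine monotonicity used to pass from $c\ge|a-b|$ to $\cos c\le\cos a\cos b+\sin a\sin b$. Conceptually, the computation simply reflects that $\phi$ is the coordinate along the ray through $x$ under the (local) isometric embedding of the cone into Euclidean space, so it is a $1$-Lipschitz linear functional; but the direct inequality above is cleaner and fully self-contained.
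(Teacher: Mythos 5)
Your proof is correct and is essentially the paper's own argument in algebraic form: the paper develops the two points into $\R^2$ at angles $\alpha,\alpha'$ and uses that projection onto the first coordinate axis is $1$-Lipschitz, and your squared inequality $r^2\sin^2 a+s^2\sin^2 b\ge 2rs\sin a\sin b$ is exactly the coordinate expression of that projection bound, while both proofs rest on the same key input $\cos c\le\cos(a-b)$ from the triangle inequality and monotonicity of cosine on $[0,\pi]$. As you note yourself in the closing remark, the two presentations differ only in whether the planar picture is made explicit.
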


\begin{proof}
  Let $(s,y),(s',y')\in C=\Con_0^N(X)$ and set $\alpha =d_X(x,y)$,
  $\alpha'=d_X(x,y')$ and $\beta=d_X(y,y')$. Note that
  $\alpha,\alpha',\beta\leq\pi$ and $\beta\geq|\alpha-\alpha'|$. Let
  $p,p'\in \R^2$ be points at angle $\alpha$ and $\alpha'$ with the
  first coordinate axis respectively and $||p||=s$, $||p'||=s'$. Now we have that 
  \begin{align*}
    d_C\big((s,y),(s',y')\big)^2 &= s^2+(s')^2-2ss'\cos\beta \geq s^2 +(s')^2-2ss'\cos|\alpha-\alpha'|\\
                               &=||p-p'||^2\;.
  \end{align*}
  On the other hand, we find that
  \begin{align*}
    |\phi(s,y)-\phi(s',y')| = |s\cos\alpha -s'\cos\alpha'| = ||q-q'||\leq ||p-p'||\;,
  \end{align*}
  where $q$ and $q'$ are the projections of $p$ and $p'$ respectively
  onto the first coordinate axis.
\end{proof}

\begin{example}
  Consider the special case $X=\S^2(1/\sqrt{3})\times \S^2(1/\sqrt{3})$
  equipped with the Cartesian product of the standard Riemannian
  distances on the spheres $\S^2(1/\sqrt{3})$ with radius $1/\sqrt{3}$
  and the normalized product measure, which is an RCD$^*(3,4)$
  space. 
  Hence, the $4$-cone over $\S^2(1/\sqrt{3})\times \S^2(1/\sqrt{3})$ is
  an RCD$^*(0,5)$ space with Ricci curvature $+\infty$ at the tip.
\end{example}


%

\end{document}